\documentclass[11pt]{article}
\usepackage{amsmath,fullpage,amssymb,amsthm,hyperref,enumerate}
\usepackage{tikz,color}
\usepackage{youngtab}
\usetikzlibrary{patterns,shapes,arrows,decorations.pathreplacing}
\tikzstyle{pnt}=[draw,ellipse,fill,inner sep=1pt]

\newtheorem{theorem}{Theorem}[section]
\newtheorem{proposition}[theorem]{Proposition}
\newtheorem{lemma}[theorem]{Lemma}
\newtheorem{corollary}[theorem]{Corollary}

\theoremstyle{definition}
  
\newtheorem{definition}[theorem]{Definition}   

\theoremstyle{remark}

\newcommand{\C}{\mathcal{C}}

\newcommand{\D}{\mathcal{D}}
\renewcommand{\S}{\mathcal{S}}

\newcommand{\si}{\sigma}
\newcommand{\bij}{\phi}

\DeclareMathOperator\des{des}
\DeclareMathOperator\Des{Des}
\DeclareMathOperator\asc{asc}
\DeclareMathOperator\Asc{Asc}
\DeclareMathOperator\Plat{Plat}
\DeclareMathOperator\plat{plat}

\DeclareMathOperator\stat{stat}

\DeclareMathOperator\SYT{SYT}
\newcommand\row{\rho_T}

\newcommand{\poly}{C}

\newcommand{\card}[1]{{\lvert #1 \rvert}}
\newcommand{\delete}[1]{}

\newcommand\nnk{\mathcal{M}_n^k}%{\bigsqcup^k[n]}

\newcommand\uu{\texttt{u}}
\newcommand\dd{\texttt{d}}

\newcommand\hN{\widehat{N}}
\newcommand\blu{\textcolor{blue}}
\newcommand\red{\textcolor{red}}
\newcommand\rd[1]{\mathbf{\textcolor{red}{#1}}}
\newcommand\bl[1]{\mathbf{\textcolor{blue}{#1}}}
\newcommand\vi[1]{\mathbf{\textcolor{violet}{#1}}}
\newcommand\desbar[1]{\draw[brown] #1+(-.3,0)--++(.3,0);}

\newcommand\arc[2]{ \draw[blue,very thick] (#1)  to [bend left=45] (#2);}

\title{Canon permutations and generalized descents\\ of standard Young tableaux}

\author{Sergi Elizalde\thanks{Department of Mathematics, Dartmouth College, Hanover, NH 03755. \texttt{sergi.elizalde@dartmouth.edu}}}

\date{}

\begin{document}

\maketitle

\begin{abstract}
Canon permutations are permutations of the multiset having $k$ copies of each integer between $1$ and $n$, with the property that
the subsequences obtained by taking the $j$th copy of each entry, for each fixed $j$, are all the same.
For $k=2$, canon permutations are sometimes called nonnesting permutations, and it is known that the polynomial that enumerates them by the number of descents factors as a product of an Eulerian polynomial and a Narayana polynomial.
We extend this result to arbitrary $k$, and we relate the problem to the enumeration of standard Young tableaux of rectangular shape with respect to generalized descent statistics. Our proof is bijective, and it also settles a conjecture of Sulanke about the distribution of certain lattice path statistics.
\end{abstract}

\section{Introduction}

\subsection{Canon permutations}

Let $\S_n$ be the set of permutations of $[n]=\{1,2,\dots,n\}$.
Let $\nnk=\{1^k,2^k,\dots,n^k\}$ be the multiset consisting of $k$ copies of each number in $[n]$. Given a permutation $\pi$ of $\nnk$, one can consider the subsequence obtained by taking the first (i.e.\ leftmost) copy of each number in $[n]$, and more generally, the subsequence obtained by taking the $j$th copy from the left of each number in $[n]$, for any given $j\in[k]$. If the subsequences obtained in this way (which must be permutations in $\S_n$)
are the same for every $j$, then $\pi$ is called a {\em canon permutation}. We denote by $\C^k_n$ the set of canon permutations of $\nnk$, and by $\C^{k,\sigma}_n$ the subset of those where the $j$th copies of each entry form the subsequence $\sigma\in\S_n$.
For example,  $351335212514424\in\C^3_5$, since the subsequences of first, second, and third copies of each entry, respectively, are all equal to $\sigma=35124$.

The term {\em canon permutation} was introduced in~\cite{elizalde_descents_2023} in reference to the musical form where different voices play the same melody, starting at different times.

When $k=2$, canon permutations are sometimes called {\em nonnesting permutations}, since they can be interpreted as nonnesting matchings of $[2n]$ where each of the $n$ arcs is labeled with a distinct element in $[n]$. Recall that a perfect matching of $[2n]$ is nonnesting if it does not contain a pair of arcs $(i_1,i_3)$ and $(i_2,i_4)$ where $i_1<i_2<i_3<i_4$.  The corresponding nonnesting permutation is obtained by simply reading, for each $i$ from $1$ to $2n$, the label of the arc containing $i$; see Figure~\ref{fig:matching} for an example. Indeed, the nonnesting condition on the matching is equivalent to the fact that the left endpoints of the arcs occur in the same order as their right endpoints.

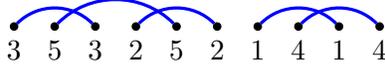
\begin{figure}[htb]
\centering
  \begin{tikzpicture}[scale=0.54]
   \foreach [count=\i] \j in {3,5,3,2,5,2,1,4,1,4}
        \node[pnt,label=below:$\j$] at (\i,0)(\i) {};
   \arc{1}{3} \arc{2}{5} \arc{4}{6} \arc{7}{9} \arc{8}{10}
   \end{tikzpicture}
\caption{The nonnesting permutation $3532521414\in\C^2_5$ viewed as a labeled nonnesting matching.}
\label{fig:matching}
\end{figure}

Nonnesting permutations have recently been considered in~\cite{elizalde_descents_2023}, as a variation of noncrossing permutations~\cite{archer_pattern_2019,elizalde_descents_2021}, which in turn generalize the well-known Stirling permutations introduced by Gessel and Stanley~\cite{gessel_stirling_1978}.
Separately nonnesting permutations naturally index the regions of the Catalan hyperplane arrangement, as shown by Bernardi in~\cite{bernardi_deformations_2018},
where they are called {\em annotated $1$-sketches}. 

In many of the above papers~\cite{gessel_stirling_1978,archer_pattern_2019,elizalde_descents_2021,elizalde_descents_2023}, a statistic that plays an important role is the number of descents. 
We say that $i$ is a {\em descent} of $\pi=\pi_1\pi_2\dots\pi_m$ (a word over the positive integers) if $\pi_i>\pi_{i+1}$, 
and that it is a {\em plateau} if $\pi_i=\pi_{i+1}$.
%, and that it is a {\em weak descent} if $\pi_i\ge\pi_{i+1}$.
Denote the {\em descent set} and the {\em plateau set} of $\pi$ by 
\begin{align*}\Des(\pi)&=\{i\in[m-1]:\pi_i>\pi_{i+1}\},\\
\Plat(\pi)&=\{i\in[m-1]:\pi_i=\pi_{i+1}\},
\end{align*}
respectively, its number of descents by $\des(\pi)=\card{\Des(\pi)}$, and its number of plateaus by 
$\plat(\pi)=\card{\Plat(\pi)}$. %, and its number of weak descents by $\wdes(\pi)=\des(\pi)+\plat(\pi)$.

The distribution of descents on $\S_n$ is given by the Eulerian polynomials
\begin{equation}\label{eq:Eulerian_def}
A_n(t)=\sum_{\sigma\in\S_n} t^{\des(\sigma)},
\end{equation} 
whose exponential generating function can be expressed as 
\begin{equation}\label{eq:EulerianGF}\sum_{n\ge0} A_n(t) \frac{z^n}{n!}=\frac{t-1}{t-e^{(t-1)z}};\end{equation}
see \cite[Prop.~1.4.5]{stanley_enumerative_2012}.

In this paper, we are interested in the polynomials
\begin{equation}\label{eq:polydef}
\poly^k_n(t,u)=\sum_{\pi\in\C^k_n} t^{\des(\pi)}u^{\plat(\pi)},
\end{equation}
which give the distribution of the number of descents and plateaus on canon permutations.
 It will be convenient to write $\poly^k_n(t,u)=\sum_{\sigma\in\S_n}\poly^{k,\si}_n(t,u)$, where
$$\poly^{k,\si}_n(t,u)=\sum_{\pi\in\C^{k,\si}_n} t^{\des(\pi)}u^{\plat(\pi)}.$$

The main result in \cite{elizalde_descents_2023} is a formula for these polynomials in the case $k=2$. Before we can state it, we need to define another family of polynomials. Let $\D_n$ be the set of Dyck paths of semilength $n$, that is, lattice paths from $(0,0)$ to $(2n,0)$ with steps $\uu=(1,1)$ and $\dd=(1,-1)$ that do not go below the $x$-axis. A {\em peak} in a Dyck path is an occurrence of two adjacent steps $\uu\dd$; it is called {\em low peak} if these steps touch the $x$-axis, and a {\em high peak} otherwise. Let $\hN_n(t,u)$ be the polynomial enumerating elements in $\D_n$ where $t$ and $u$ mark the number of high peaks and the number of low peaks, respectively. The generating function for these polynomials, called {\em Narayana polynomials}, is 
\begin{equation}\label{eq:Ntu}
\sum_{n\ge0}\hN_n(t,u)z^n=\frac{2}{1+(1+t-2u)z+\sqrt{1-2(1+t)z+(1-t)^2z^2}}.
\end{equation}
It is shown in~\cite{elizalde_descents_2023} that, when $k=2$, the polynomials in equation~\eqref{eq:polydef} factor as follows.

\begin{theorem}[\cite{elizalde_descents_2023}]\label{thm:nonnesting} 
For $n\ge1$,
$$\poly^2_n(t,u)=A_n(t)\,\hN_n(t,u).$$
More specifically, for all $\sigma\in\S_n$, 
$$\poly_n^{2,\sigma}(t,u)=t^{\des(\sigma)}\hN_n(t,u).$$
\end{theorem}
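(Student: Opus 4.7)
The first identity follows from the second by summing over $\sigma \in \S_n$: indeed $\sum_\sigma \poly_n^{2,\sigma}(t,u) = \bigl(\sum_\sigma t^{\des(\sigma)}\bigr) \hN_n(t,u) = A_n(t)\hN_n(t,u)$. I therefore focus on the refined identity, which I plan to prove by exhibiting a bijection $\Phi_\sigma \colon \C^{2,\sigma}_n \to \D_n$ satisfying $\plat(\pi)=\lpea(\Phi_\sigma(\pi))$ and $\des(\pi)=\des(\sigma)+\hpea(\Phi_\sigma(\pi))$.

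The ``natural'' label-forgetting map $\psi \colon \C^{2,\sigma}_n \to \D_n$ (sending $\pi$ to the underlying Dyck path of its nonnesting matching, as in Figure~\ref{fig:matching}) already handles the plateau statistic, since a plateau $\pi_i=\pi_{i+1}$ occurs exactly when positions $i,i+1$ are the two endpoints of a length-one arc, and length-one arcs are precisely the low peaks of the Dyck path. A short case analysis on the four possible consecutive step-type pairs (\uu\uu, \dd\dd, \uu\dd, \dd\uu) further shows that for $\sigma=\id$ the map $\psi$ also satisfies $\des(\pi)=\hpea(\psi(\pi))$ pointwise: identifying $\pi_i$ with the index of the arc occupying position $i$, descents of $\pi$ can arise only at \uu\dd\ high peaks, where a larger-indexed opening arc is immediately followed by a smaller-indexed closing arc. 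This already gives $\poly_n^{2,\id}(t,u)=\hN_n(t,u)$.

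For general $\sigma$, however, $\psi$ fails pointwise---for instance, $\sigma=321$ with $D=\uu\uu\uu\dd\dd\dd$ yields $\des(\pi)=4$ while $\des(\sigma)+\hpea(D)=3$---so $\Phi_\sigma$ must genuinely differ from $\psi$. My plan is to construct $\Phi_\sigma$ recursively using the first-return decomposition $D=\uu D_1\dd D_2$ of the underlying Dyck path. This partitions the arcs of $\pi$ into those opening strictly inside the first-return block (elevated by one height unit) and those opening after it, with the arc that opens at position~$1$ straddling the boundary. I would then apply the inductive hypothesis to the two smaller nonnesting sub-permutations (with $\sigma$ restricted to the labels used on each side) and reassemble the outputs, carefully inserting the boundary arc.

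The main obstacle will be the boundary bookkeeping in this recursion. Descents of $\pi$ that cross the first-return boundary or involve the boundary arc directly interact nontrivially with $\sigma$, and a naive recursive recombination typically produces the wrong count of high peaks. Resolving this likely requires an auxiliary local swap or sign-reversing involution, performed at each recursive step within each level set of $\lpea$, that redistributes the boundary descents so that the resulting $\hpea(\Phi_\sigma(\pi))$ matches $\des(\pi)-\des(\sigma)$.
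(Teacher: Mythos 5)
Your reduction of the first identity to the second is the same as the paper's, and your analysis of the identity-permutation case is correct: under the label-forgetting map, plateaus of $\pi$ correspond to length-one arcs, which in a nonnesting matching are exactly the low peaks, and for $\sigma=\id$ a case analysis on consecutive step pairs shows descents occur exactly at high peaks. Your counterexample $\sigma=321$, $D=\uu\uu\uu\dd\dd\dd$ correctly shows the naive map fails for general $\sigma$. But at that point the proof stops: the passage from $\des_\sigma$ to $\des(\sigma)+\des_{\id}$ is the entire content of the theorem beyond the classical Narayana distribution, and your proposal leaves it as a plan whose central difficulty (``a naive recursive recombination typically produces the wrong count of high peaks'') you acknowledge but do not resolve. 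The unspecified ``auxiliary local swap or sign-reversing involution'' at the first-return boundary is precisely where the mathematics lives, so this is a genuine gap rather than a routine verification left to the reader. It is also not obvious that a first-return recursion is the right frame: the discrepancy between $\des_\sigma$ and $\des_{\id}$ is governed by the descent set of $\sigma$ globally, not by the block structure of the Dyck path, and the boundary arc interacts with $\sigma$ through the relative order of labels on the two sides of the split, which the recursion scrambles.

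For contrast, the paper avoids any recursion on the path and instead interpolates between $\sigma$ and the identity through a chain of elementary moves on the permutation, pushing a bijection on $\SYT(2^n)$ (equivalently $\D_n$) along each move. Two kinds of moves suffice: transposing two non-adjacent entries of $\sigma$ with consecutive values, which preserves $\Des(\sigma)$ and is matched by a local block-swapping bijection $f_{rs}$ on tableau words preserving the pair $(\des_\sigma,\plat)$'s distribution (Lemma~\ref{lem:fF}); and, once $\sigma$ has been normalized to a reverse-layered permutation, deleting its largest descent, matched by a bijection $g_{\ell m}$ that decreases the generalized descent count by exactly one while preserving plateaus (Lemma~\ref{lem:g}). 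Composing gives $\des_\sigma(T)=\des_{\id}(\bij_\sigma(T))+\des(\sigma)$ and $\plat(T)=\plat(\bij_\sigma(T))$, which is exactly the identity your $\Phi_\sigma$ would need to satisfy. If you want to salvage your approach, you would need to either carry out the boundary correction explicitly or switch to a scheme like the paper's, where the change in the descent statistic is decomposed into unit steps indexed by the structure of $\sigma$ rather than by the structure of the Dyck path.
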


The main goal of this paper is to generalize this result to arbitrary $k$. 
As we will see, in the general version, the role of Dyck paths will be played by standard Young tableaux of rectangular shape.

\section{Generalized descents of standard Young tableaux}

\subsection{Generalized Narayana polynomials}

For $n,k\ge1$, let $\SYT(k^n)$ denote the set of standard Young tableaux of rectangular shape consisting of $n$ rows and $k$ columns. Such tableaux have $kn$ cells, filled with the numbers $1,2,\dots,kn$ so that numbers in rows increase from left to right, and numbers in columns increase from top to bottom. The {\em descent set} of $T\in\SYT(k^n)$, denoted by $\Des(T)$, is the set of $i\in[kn-1]$ such that $i$ appears in a row higher than $i+1$ in $T$. The number of descents of $T$ is $\des(T)=\card{\Des(T)}$.
If we denote by $\row(i)$ the index of the row in $T$ where $i$ appears, where rows are indexed from $1$ to $n$ starting from the top row, then we can write
$$\Des(T)=\{i\in[kn-1]:\row(i)<\row(i+1)\}.$$
Similarly, define the ascent and plateau sets of $T$ by
\begin{align*}\Asc(T)&=\{i\in[kn-1]:\row(i)>\row(i+1)\},\\
\Plat(T)&=\{i\in[kn-1]:\row(i)=\row(i+1)\},
\end{align*}
and let $\asc(T)=\card{\Asc(T)}$ and $\plat(T)=\card{\Plat(T)}$.

The distribution of the number of ascents and descents on standard Young tableaux of rectangular shape is given by the {\em generalized Narayana numbers}, which were studied by Sulanke~\cite{sulanke_generalizing_2004} in connection to higher-dimensional lattice paths, 
and are defined as $$N(n,k,h)=|\{T\in\SYT(k^n):\asc(T)=h\}|.$$
It is also shown in~\cite{sulanke_generalizing_2004} that 
\begin{equation}\label{eq:Ndes} 
N(n,k,h)=|\{T\in\SYT(k^n):\des(T)=h+n-1\}|. 
\end{equation}
Sulanke obtains the following formula for the generalized Narayana numbers, which is implicit in MacMahon's work on plane partitions~\cite{macmahon_combinatory_1960}. We state it in a slightly different form, in order to highlight the symmetry $N(n,k,h)=N(k,n,h)$.

\begin{proposition}[{\cite[Prop.~1]{sulanke_generalizing_2004}}]
For $0\le h\le (k-1)(n-1)$, 
$$N(n,k,h)=\sum_{\ell=0}^h (-1)^{h-\ell}\binom{kn+1}{h-\ell}\prod_{i=0}^{n-1}\prod_{j=0}^{k-1}\frac{i+j+1+\ell}{i+j+1}.$$
\end{proposition}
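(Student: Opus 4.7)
The plan is to derive the identity from the generating function for semistandard Young tableaux of rectangular shape, combined with Stanley's theorem on $P$-partitions. Throughout, write $P(\ell):=\prod_{i=0}^{n-1}\prod_{j=0}^{k-1}\frac{i+j+1+\ell}{i+j+1}$ for the product that appears on the right-hand side.

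First I would identify $P(\ell)$ with a count of semistandard Young tableaux. Applying the hook--content formula to the rectangle $k^n$ (whose hook lengths are $n+k-i-j+1$ and whose contents are $j-i$), one gets $s_{k^n}(1^m)=\prod_{i=1}^{n}\prod_{j=1}^{k}\frac{m+j-i}{n+k-i-j+1}$, and reindexing with $m=n+\ell$ shows that $s_{k^n}(1^{n+\ell})=P(\ell)$. Equivalently, $P(\ell)$ is the number of plane partitions fitting in an $n\times k\times\ell$ box, by MacMahon's box formula. Note in particular that $s_{k^n}(1^m)=0$ for $m<n$.

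Second I would invoke Stanley's theorem on $P$-partitions applied to the Young diagram $k^n$, which yields the generating function
$$\sum_{m\ge 0}s_{k^n}(1^m)\,x^m=\frac{\sum_{T\in\SYT(k^n)}x^{\des(T)+1}}{(1-x)^{kn+1}}.$$
Substituting $m=n+\ell$ on the left and grouping tableaux on the right by the value of $h=\des(T)-(n-1)$ via~\eqref{eq:Ndes} rewrites this as
$$\sum_{\ell\ge 0}P(\ell)\,x^\ell=\frac{\sum_{h\ge 0}N(n,k,h)\,x^{h}}{(1-x)^{kn+1}}.$$

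Finally, multiplying both sides by $(1-x)^{kn+1}$, expanding by the binomial theorem, and extracting the coefficient of $x^h$ gives $N(n,k,h)=\sum_{\ell=0}^{h}(-1)^{h-\ell}\binom{kn+1}{h-\ell}P(\ell)$, which is the claim. The main obstacle is pinning down the precise form of Stanley's theorem so that it matches the descent convention of~\eqref{eq:Ndes} (in particular the $+1$ shift in the exponent of $x$) and keeping the shifts between $m$, $\ell$, and $h$ consistent; the small case $n=k=2$, where $P(0)=1$, $P(1)=6$ and $N(2,2,0)=N(2,2,1)=1$, is a quick sanity check that the exponents line up.
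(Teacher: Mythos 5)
Your derivation is correct. The paper does not prove this proposition at all --- it is quoted from Sulanke, who in turn obtains it from a lattice-path/ballot-sequence argument resting on product formulas that, as the paper notes, are implicit in MacMahon's work on plane partitions. Your route is a clean, self-contained alternative: the hook--content computation $s_{k^n}(1^{n+\ell})=\prod_{i=0}^{n-1}\prod_{j=0}^{k-1}\frac{i+j+1+\ell}{i+j+1}$ checks out (for fixed column index both sides reduce to the same ratio of factorials), and the $P$-partition identity
$\sum_{m\ge 0}s_{k^n}(1^m)x^m=(1-x)^{-kn-1}\sum_{T\in\SYT(k^n)}x^{\des(T)+1}$
does hold with exactly the descent convention used in the paper ($i\in\Des(T)$ iff $i+1$ lies in a strictly lower row), so the shift $m=n+\ell$, together with equation~\eqref{eq:Ndes} giving $\des(T)=h+n-1$, cancels the factor $x^n$ on both sides and the binomial expansion of $(1-x)^{kn+1}$ delivers the alternating sum. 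The vanishing $s_{k^n}(1^m)=0$ for $m<n$, which you note, is what makes the reindexing legitimate. What your approach buys is independence from Sulanke's path model and a derivation entirely inside standard symmetric-function machinery; what it costs is reliance on two quotable but nontrivial theorems (hook--content and Stanley's $(P,\omega)$-partition reciprocity for the order polynomial of a rectangle), whereas Sulanke's treatment is closer to the elementary lattice-path setting the rest of his paper lives in. Since the proposition also requires $N(n,k,h)$ to vanish for $h$ outside $[0,(k-1)(n-1)]$, it is worth remarking that your generating function argument gives this for free, as the numerator polynomial has degree $k(n-1)+1$ in $x$.
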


Define now the {\em generalized Narayana polynomials}
\begin{equation}\label{eq:Nnk}
N_{n,k}(t,u)=\sum_{T\in\SYT(k^n)}t^{\asc(T)} u^{\plat(T)},
\end{equation}
and note that 
$$N_{n,k}(t,1)=\sum_{h=0}^{(k-1)(n-1)}N(n,k,h)\,t^h.$$

For $k=2$, there is a straightforward bijection between $\SYT(2^n)$ and the set $\D_n$ of Dyck paths, obtained by letting the $i$th step of the path be $\uu$ if $i$ is in the first column of the tableau, and $\dd$ otherwise, for each $i\in[2n]$. Under this bijection, ascents of the tableau become high peaks of the Dyck path, and plataus become low peaks. It follows that $N_{n,2}(t,u)=\hN_n(t,u)$, the polynomials from equation~\eqref{eq:Ntu}.

\subsection{Generalized descents}

Next we generalize the definitions of descents and ascents in standard Young tableaux. 

\begin{definition}\label{def:Des_sigma}
For any permutation $\sigma=\sigma_1\dots\sigma_n\in\S_n$, define the {\em $\sigma$-descent set} of $T\in\SYT(k^n)$ by 
$$\Des_\sigma(T)=\{i\in[kn-1]:\sigma_{\row(i)}>\sigma_{\row(i+1)}\},$$
and the number of $\sigma$-descents by $\des_\sigma(T)=\card{\Des_\sigma(T)}$.
\end{definition}

For example, is $\sigma=35142\in\S_5$ and $T\in\SYT(3^5)$ is the tableau on the left of Figure~\ref{fig:Des_sigma}, we have $\Des_\sigma(T)=\{2,4,7,9,10,11,14\}$ and $\des_\sigma(T)=7$.
Definition~\ref{def:Des_sigma} generalizes the usual descent and ascent sets of standard Young tableaux, since 
$$\Des(T)=\Des_{n\dots21}(T) \quad\text{and}\quad \Asc(T)=\Des_{12\dots n}(T)$$
for all $T\in\SYT(k^n)$.

\begin{figure}[htb]
\centering
 \begin{tikzpicture}[scale=.5]
\draw (0,0) grid (3,5);
\node at (.5,4.5) {$1$}; \node at (1.5,4.5) {$3$}; \node at (2.5,4.5) {$6$};
\node at (.5,3.5) {$2$}; \node at (1.5,3.5) {$4$}; \node at (2.5,3.5) {$9$};
\node at (.5,2.5) {$5$}; \node at (1.5,2.5) {$8$}; \node at (2.5,2.5) {$12$};
\node at (.5,1.5) {$7$}; \node at (1.5,1.5) {$10$}; \node at (2.5,1.5) {$14$};
\node at (.5,.5) {$11$}; \node at (1.5,.5) {$13$}; \node at (2.5,.5) {$15$};
\node[brown] at (-.5,5.5) {$\sigma$}; \node[brown,scale=.8] at (-.5,4.5) {$3$}; \node[brown,scale=.8] at (-.5,3.5) {$5$}; \node[brown,scale=.8] at (-.5,2.5) {$1$}; \node[brown,scale=.8] at (-.5,1.5) {$4$}; \node[brown,scale=.8] at (-.5,.5) {$2$};
\node at (4.25,2.5) {$\mapsto$};
\node[right] at (5,2.5) {$353513415421242=\pi$
};
 \end{tikzpicture}
 \caption{The bijection from Lemma~\ref{lem:sigmaT}, with $\sigma=35142$. 
 Note that $\Des_\sigma(T)=\Des(\pi)=\{2,4,7,9,10,11,14\}$.}
\label{fig:Des_sigma}
\end{figure}

Standard Young tableaux of rectangular shape are related to canon permutations via the following bijection; see Figure~\ref{fig:Des_sigma} for an example. 

\begin{lemma}\label{lem:sigmaT}
For every $\sigma\in\S_n$, the map 
$$\begin{array}{ccc}
\SYT(k^n)&\longrightarrow&\C^{k,\sigma}_n\\
T&\mapsto&\pi,
\end{array}
$$
where $\pi=\sigma_{\row(1)}\sigma_{\row(2)}\dots \sigma_{\row(kn)}$,
is a bijection. Additionally, $$\Des_\sigma(T)=\Des(\pi) \quad\text{and}\quad \Plat(T)=\Plat(\pi),$$
which implies that 
$$\sum_{T\in\SYT(k^n)} t^{\des_\sigma(T)} u^{\plat(T)}=\poly_n^{k,\sigma}(t,u).$$
\end{lemma}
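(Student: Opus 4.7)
The plan is to verify the map is well-defined, establish the two statistic equalities essentially by unraveling definitions, and then exhibit an explicit inverse whose well-definedness captures the only nontrivial content: that the canon condition on $\pi$ is exactly what encodes standardness of $T$.

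First, I would check that $\pi$ as defined lies in $\C^{k,\sigma}_n$. Since $T$ has shape $k^n$, each row index $r\in[n]$ occurs exactly $k$ times among $\row(1),\dots,\row(kn)$, so each value $\sigma_r$ occurs $k$ times in $\pi$ and $\pi$ is a permutation of $\nnk$. To see that the $j$th copy of each entry forms the subsequence $\sigma$, fix $j\in[k]$ and $r\in[n]$: the $j$th occurrence of $r$ among $\row(1),\dots,\row(kn)$ is precisely at position $T(r,j)$, because entries increase along rows of $T$. Thus the positions in $\pi$ of the $j$th copy of $\sigma_r$ is $T(r,j)$, and the positions of the $j$th copies of $\sigma_1,\sigma_2,\dots,\sigma_n$ are $T(1,j),T(2,j),\dots,T(n,j)$; these form an increasing sequence because column $j$ of $T$ increases from top to bottom, so reading $\pi$ at these positions yields $\sigma_1\sigma_2\cdots\sigma_n=\sigma$.

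Next I would verify the statistic identities by direct unpacking. By the defining formula for $\pi$,
\[\Des(\pi)=\{i\in[kn-1]:\pi_i>\pi_{i+1}\}=\{i:\sigma_{\row(i)}>\sigma_{\row(i+1)}\}=\Des_\sigma(T),\]
which is Definition~\ref{def:Des_sigma}. Since $\sigma$ is a bijection, $\sigma_{\row(i)}=\sigma_{\row(i+1)}$ iff $\row(i)=\row(i+1)$, so $\Plat(\pi)=\Plat(T)$ as well. The generating function identity then follows by summing $t^{\des_\sigma(T)}u^{\plat(T)}$ over $T\in\SYT(k^n)$ and noting that the map $T\mapsto\pi$ is a bijection onto $\C^{k,\sigma}_n$, which is what remains.

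For the bijection, I would write down the inverse explicitly: given $\pi\in\C^{k,\sigma}_n$, define a filling $T$ of the rectangle $k^n$ by placing the entry $i\in[kn]$ in row $\sigma^{-1}(\pi_i)$, inserting into the leftmost empty cell of that row. The row-counts match because each value of $\sigma^{-1}(\pi_i)$ occurs $k$ times, and rows are automatically increasing left-to-right since the entries $1,2,\dots,kn$ are inserted in order. The delicate point, which I expect to be the main (only) obstacle, is showing that columns of $T$ are strictly increasing top to bottom; but this is exactly the canon condition. Indeed, by construction column $j$ of $T$ contains, in rows $1,\dots,n$, the positions in $\pi$ of the $j$th copies of $\sigma_1,\sigma_2,\dots,\sigma_n$, and the canon hypothesis that the $j$th-copy subsequence equals $\sigma$ is precisely the statement that these positions form an increasing sequence. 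Finally, a quick check that the two constructions compose to the identity in both directions (following the placement rule $i\mapsto\sigma^{-1}(\pi_i)=\sigma^{-1}(\sigma_{\row(i)})=\row(i)$) completes the proof.
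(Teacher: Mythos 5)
Your proposal is correct and follows essentially the same approach as the paper: identify the entry in cell $(r,j)$ of $T$ with the position of the $j$th copy of $\sigma_r$ in $\pi$, observe that row-increase and column-increase of $T$ correspond exactly to the multiplicity and canon conditions on $\pi$, and read off the descent and plateau identities directly from the definition of $\pi$. You merely spell out the inverse map and the well-definedness checks in more detail than the paper's terser version.
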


\begin{proof}
Let us show that, for fixed $\sigma\in\S_n$, the map $T\mapsto\pi$ is a bijection between $\SYT(k^n)$ and $\C^{k,\sigma}_n$.
By construction, entry $i$ is in row $r$ and column $j$ of $T$ if and only if $\pi_i$ is the $j$th copy (from the left) of $\sigma_r$ in $\pi$. Thus, the entries in column $j$ of $T$ are the positions in $\pi$ of the $j$th copies of each number, which, when read from left to right, form the permutation $\sigma$. This proves that $\pi\in\C^{k,\sigma}_n$. Conversely, given $\pi\in\C^{k,\sigma}_n$, we can recover $T$ using the above observation. 

Finally, note that $i\in\Des(\pi)$ if and only if $\pi_i>\pi_{i+1}$, that is, $\sigma_{\row(i)}>\sigma_{\row(i+1)}$, which is equivalent to $i\in\Des_\sigma(T)$. Similarly, $i\in\Plat(\pi)$ if and only if $\pi_i=\pi_{i+1}$, that is, $\sigma_{\row(i)}=\sigma_{\row(i+1)}$, which is equivalent to $\row(i)=\row(i+1)$, and also to $i\in\Plat(T)$.
\end{proof}

By letting $\sigma\in\S_n$ in Lemma~\ref{lem:sigmaT} vary, we obtain a bijection
$$\begin{array}{ccc}
\S_n\times\SYT(k^n)&\to&\C^k_n\\
(\sigma,T)&\mapsto&\pi.
\end{array}
$$

\section{Main result}

Our main result is the following generalization of Theorem~\ref{thm:nonnesting} to arbitrary $k$.

\begin{theorem}\label{thm:main} 
For $n,k\ge1$,
$$\poly^k_n(t,u)=A_n(t)\,N_{n,k}(t,u).$$
More specifically, for all $\sigma\in\S_n$, 
\begin{equation}\label{eq:polysigma}
\poly_n^{k,\sigma}(t,u)=t^{\des(\sigma)}N_{n,k}(t,u).
\end{equation}
\end{theorem}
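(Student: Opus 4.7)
By Lemma~\ref{lem:sigmaT}, identity~\eqref{eq:polysigma} is equivalent to the $\SYT$-level statement
$$\sum_{T\in\SYT(k^n)} t^{\des_\sigma(T)}\,u^{\plat(T)} \;=\; t^{\des(\sigma)}\,N_{n,k}(t,u),$$
and noting that $\Des_\id(T)=\Asc(T)$ for the identity permutation $\id=12\cdots n$, the right-hand side equals $t^{\des(\sigma)}\sum_T t^{\des_\id(T)}u^{\plat(T)}$. The plan is to produce, for every $\sigma\in\S_n$, a plateau-preserving bijection $\Phi_\sigma:\SYT(k^n)\to\SYT(k^n)$ satisfying
$$\des_\sigma(T)=\des_\id(\Phi_\sigma(T))+\des(\sigma)$$
for all $T\in\SYT(k^n)$. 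The case $\sigma=\id$ is trivial with $\Phi_\id$ the identity map, and I proceed by induction on $\des(\sigma)$. Summing~\eqref{eq:polysigma} over $\sigma\in\S_n$ and invoking~\eqref{eq:Eulerian_def} then yields the first formula $\poly^k_n(t,u)=A_n(t)\,N_{n,k}(t,u)$.

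For the inductive step, fix a descent position $j$ of $\sigma$, write $a=\sigma_j>b=\sigma_{j+1}$, and set $\sigma'=\sigma\cdot s_j$, so that $\des(\sigma')=\des(\sigma)-1$. Assuming $\Phi_{\sigma'}$ is available, it suffices to produce a plateau-preserving bijection $\psi:\SYT(k^n)\to\SYT(k^n)$ with $\des_\sigma(T)=\des_{\sigma'}(\psi(T))+1$, since then $\Phi_\sigma:=\Phi_{\sigma'}\circ\psi$ does the job. A direct case analysis of how $\sigma_{\row(i)}$ compares with $\sigma_{\row(i+1)}$ under $\sigma$ versus $\sigma'$ shows that $\Des_\sigma(T)\triangle\Des_{\sigma'}(T)$ is supported on indices $i$ for which $\{\row(i),\row(i+1)\}$ meets $\{j,j+1\}$: the descent status flips exactly when the ``other'' row $r$ (the one not in $\{j,j+1\}$, if any) satisfies $b<\sigma_r<a$, or when $\{\row(i),\row(i+1)\}=\{j,j+1\}$. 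Guided by this, $\psi$ would act by swapping entries in $T$ between rows $j$ and $j+1$, interleaved with entries in the intermediate rows $\{r : b<\sigma_r<a\}$, using local Bender--Knuth--style involutions to restore the SYT conditions after the swap.

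The main obstacle will be to assemble these local modifications into a single well-defined bijection that changes $\des_\sigma$ by exactly $+1$ while preserving $\plat$; the delicate point is the interaction between rows $j,j+1$ and the intermediate rows $r$ with $b<\sigma_r<a$, where at each boundary the descent status can flip in either direction, so a careful accounting is needed to ensure the net effect is $+1$. An alternative, likely cleaner route is a direct non-inductive bijection $\Phi_\sigma$: given $T$, analyze the corresponding canon permutation $\pi\in\C^{k,\sigma}_n$ from Lemma~\ref{lem:sigmaT}, identify $\des(\sigma)$ of its descents as coming from pairs of rows of $T$ whose order is inverted by $\sigma$, and show that the complementary descents together with the plateaus of $\pi$ are equidistributed with $(\des_\id,\plat)$ on $\SYT(k^n)$. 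Such a direct construction should also be what makes the link to Sulanke's conjecture on lattice-path statistics alluded to in the abstract.
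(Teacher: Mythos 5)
Your reduction to the SYT-level statement via Lemma~\ref{lem:sigmaT}, and the goal of building a plateau-preserving bijection $\Phi_\sigma$ with $\des_\sigma(T)=\des_{12\cdots n}(\Phi_\sigma(T))+\des(\sigma)$, is exactly the paper's setup (equation~\eqref{eq:bij_property}). But the entire content of the theorem lies in actually constructing that bijection, and your proposal stops short of doing so: you describe a plan involving ``local Bender--Knuth--style involutions'' and then explicitly flag as ``the main obstacle'' the step of assembling them into a well-defined map with the right effect on $\des_\sigma$ and $\plat$. That obstacle is not a technicality; for $k\ge 3$ the paper points out that even the set-valued pairs $(\Des_\sigma,\plat)$ and $(\Des_\tau,\plat)$ fail to be equidistributed for permutations $\sigma,\tau$ with the same descent set, so local involutions that try to preserve too much structure will not exist, and the correct maps (Definitions~\ref{def:frs} and~\ref{def:glm}) act on whole maximal blocks of the tableau word in a way that shuffles individual descent positions while controlling only the count.

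There is also a concrete flaw in your inductive scheme. You induct on $\des(\sigma)$ by picking a descent position $j$ and setting $\sigma'=\sigma\cdot s_j$, claiming $\des(\sigma')=\des(\sigma)-1$; this is false in general (take $\sigma=231$ and $j=2$: then $\sigma'=213$ still has one descent), so the induction does not get off the ground. The swap of adjacent \emph{positions} carrying possibly distant values $a$ and $b$ is precisely the hard case, since all intermediate rows $r$ with $b<\sigma_r<a$ interact with the swap. The paper's route avoids this by a different two-stage decomposition: first it transposes \emph{consecutive values} sitting in \emph{non-adjacent positions} (which never creates interference from intermediate rows and preserves $\des_\sigma(T)$ and $\plat(T)$ exactly, Lemma~\ref{lem:fF}) to normalize $\sigma$ to the reverse-layered permutation $\lambda$ with $\Des(\lambda)=\Des(\sigma)$; it then peels off the descents of $\lambda$ one at a time with the maps $g_{\ell m}$ of Definition~\ref{def:glm}, whose analysis (Lemma~\ref{lem:g}) requires a careful accounting of boundary contributions of blocks in a three-letter simplified tableau word. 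To complete your proof you would need to either supply such a construction or find a genuinely different argument; as written, the key step is missing.
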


The second statement immediately implies the first by summing over all $\sigma\in\S_n$. 
Using Lemma~\ref{lem:sigmaT} and equation~\eqref{eq:Nnk}, we can rewrite equation~\eqref{eq:polysigma} as
$$\sum_{T\in\SYT(k^n)} t^{\des_\sigma(T)} u^{\plat(T)}=t^{\des(\sigma)}\sum_{T\in\SYT(k^n)}t^{\asc(T)} u^{\plat(T)}.$$
Note that $\asc(T)=\des_{12\dots n}(T)$. Thus, in order to prove Theorem~\ref{thm:main}, it suffices to construct a bijection
$\bij_\sigma:\SYT(k^n)\to\SYT(k^n)$ such that, for all $T\in\SYT(k^n)$,
\begin{equation}\label{eq:bij_property}
\des_\sigma(T)=\des_{12\dots n}(\bij_\sigma(T))+\des(\sigma)\quad\text{and}\quad\plat(T)=\plat(\bij_\sigma(T)).
\end{equation}
We will do this in Section~\ref{sec:bij}.

In the special case that $\sigma=n\dots21$, using that $\des_{n\dots21}(T)=\des(T)$ and $\des(n\dots21)=n-1$, the bijection $\bij_{n\dots21}$ has the property that
$$\des(T)=\asc(\bij_{n\dots21}(T))+n-1.$$
A similar bijection with this property was given by Sulanke in~\cite[Prop.~2]{sulanke_generalizing_2004}, in order to prove equation~\eqref{eq:Ndes}. 

In the same paper, in a remark at the end of \cite[Sec.~3.1]{sulanke_generalizing_2004}, Sulanke defines certain statistics $\asc_\tau$ on higher-dimensional lattice paths, which are closely related to our statistics $\des_\sigma$, and he conjectures that they have a (shifted) generalized Narayana distribution. Sulanke's conjecture can be shown to be equivalent to our Theorem~\ref{thm:main} when $u=1$.

\section{The bijections}\label{sec:bij}

The goal of this section is to construct a bijection $\bij_\sigma$ that satisfies equation~\eqref{eq:bij_property}, relating the statistics $\des_\sigma$ and $\des_{12\dots n}$, while preserving the number of plateaus. This will be achieved in two steps. In Section~\ref{sec:nonadjacent}, we describe a sequence of bijections that relates the statistics $\des_\sigma$ and $\des_\lambda$, where $\lambda$ is a very specific permutation having the same descent set as $\sigma$. In Section~\ref{sec:removing}, we describe a sequence of bijections that relates the statistics $\des_\lambda$ and $\des_{12\dots n}$. The bijection $\bij_\sigma$ will be obtained by composing the two sequences of bijections.

It will be convenient to identify $T\in\SYT(k^n)$ with its {\em tableau word} $w_T=\row(1)\row(2)\dots\row(kn)$. This encoding is simply the map from Lemma~\ref{lem:sigmaT} when $\sigma$ is the identity permutation, and so 
$w_T\in\C^{k,12\dots n}_n$. As an alternative characterization, a permutation of $\nnk$ is the tableau word of some $T\in\SYT(k^n)$ if and only if, in every prefix, the number of copies of $j$ is greater than or equal to the number of copies of $j+1$, for all $j\in[k-1]$. We call this the {\em ballot condition}.

We say that two statistics $\stat_1$ and $\stat_2$ on a set $X$ are equidistributed if there is a bijection $f:X\to X$ such that, for all $x\in X$, we have $\stat_1(x)=\stat_2(f(x))$. We allow $\stat_1$ and $\stat_2$ to be tuples of statistics, which can be integer-valued or set-valued.

\subsection{Switching non-adjacent entries of $\sigma$}\label{sec:nonadjacent}

Let $r,s\in[n]$ be such that $|r-s|>1$. We start by defining two bijections $f_{rs},F_{rs}:\SYT(k^n)\to\SYT(k^n)$ that affect only the entries in rows $r$ and $s$ of the tableau. Given $T\in\SYT(k^n)$, we will describe $f_{rs}(T)$ and $F_{rs}(T)$ in terms of their tableau words, which will be obtained from $w_T$ by applying certain permutations to each of the maximal consecutive blocks $B$ having entries in $\{r,s\}$.
Since rows $r$ and $s$ are not adjacent, permuting the entries within each such block $B$ does not violate the ballot condition,
and so the resulting word is guaranteed to encode a tableau in $\SYT(k^n)$.

\begin{definition}\label{def:frs}
To construct the tableau word of $f_{rs}(T)$, we change each block $B$ to $B'$ as follows. 
\begin{enumerate}[(1)]
\item If $B$ starts and ends with the same letter, let $B'=B$.
\item Otherwise, if $B=r^{a_1}s^{b_1}\dots r^{a_j}s^{b_j}$ for some $j\ge1$ and $a_i,b_i\ge1$ for all $i$, let $B'=s^{b_1}r^{a_1}\dots s^{b_j}r^{a_j}$;
conversely, if $B=s^{b_1}r^{a_1}\dots s^{b_j}r^{a_j}$, let  $B'=r^{a_1}s^{b_1}\dots r^{a_j}s^{b_j}$.
\end{enumerate}
\end{definition}
For example, if $B=srrsssrr$, we get $B'=rrsrrsss$. 

\begin{figure}[htb]
\centering
 \begin{tikzpicture}[scale=.5]
 \fill[violet!35] (0,1) rectangle (6,2);
  \fill[violet!35] (0,3) rectangle (6,4);
\draw (0,0) grid (6,4);
\node at (.5,3.5) {$1$}; \node at (1.5,3.5) {$3$}; \node at (2.5,3.5) {$5$}; \node at (3.5,3.5) {$9$}; \node at (4.5,3.5) {$11$}; \node at (5.5,3.5) {$12$};
\node at (.5,2.5) {$2$}; \node at (1.5,2.5) {$7$}; \node at (2.5,2.5) {$8$};  \node at (3.5,2.5) {$14$}; \node at (4.5,2.5) {$16$}; \node at (5.5,2.5) {$20$};
\node at (.5,1.5) {$4$}; \node at (1.5,1.5) {$10$}; \node at (2.5,1.5) {$13$};  \node at (3.5,1.5) {$17$}; \node at (4.5,1.5) {$18$}; \node at (5.5,1.5) {$22$};
\node at (.5,.5) {$6$}; \node at (1.5,.5) {$15$}; \node at (2.5,.5) {$19$};  \node at (3.5,.5) {$21$}; \node at (4.5,.5) {$23$}; \node at (5.5,.5) {$24$};
\node[left] at (0,2) {$T=$};
\node at (2,-1)  {$w_T=12\hspace{.1em}\vi{\underline{131}}\hspace{.1em}422\hspace{.1em}\vi{\underline{13113}}\hspace{.1em}242\vi{33}424\vi{3}44$};
\draw[->] (8,3)--node[above] {$f_{13}$} (11,5);
\draw[->] (8,.5)--node[above] {$F_{13}$} (11,-1.5);

\begin{scope}[shift={(12.5,3.5)}]
 \fill[violet!35] (0,1) rectangle (6,2);
  \fill[violet!35] (0,3) rectangle (6,4);
\draw (0,0) grid (6,4);
\node at (.5,3.5) {$1$}; \node at (1.5,3.5) {$3$}; \node at (2.5,3.5) {$5$}; \node at (3.5,3.5) {$10$}; \node at (4.5,3.5) {$12$}; \node at (5.5,3.5) {$13$};
\node at (.5,2.5) {$2$}; \node at (1.5,2.5) {$7$}; \node at (2.5,2.5) {$8$};  \node at (3.5,2.5) {$14$}; \node at (4.5,2.5) {$16$}; \node at (5.5,2.5) {$20$};
\node at (.5,1.5) {$4$}; \node at (1.5,1.5) {$9$}; \node at (2.5,1.5) {$11$};  \node at (3.5,1.5) {$17$}; \node at (4.5,1.5) {$18$}; \node at (5.5,1.5) {$22$};
\node at (.5,.5) {$6$}; \node at (1.5,.5) {$15$}; \node at (2.5,.5) {$19$};  \node at (3.5,.5) {$21$}; \node at (4.5,.5) {$23$}; \node at (5.5,.5) {$24$};
\node at (3,-1) {$12\hspace{.1em}\vi{\underline{131}}\hspace{.1em}422\hspace{.1em}\vi{\underline{31311}}\hspace{.1em}242\vi{33}424\vi{3}44$};
\end{scope}

\begin{scope}[shift={(12.5,-3.5)}]
 \fill[violet!35] (0,1) rectangle (6,2);
  \fill[violet!35] (0,3) rectangle (6,4);
\draw (0,0) grid (6,4);
\node at (.5,3.5) {$1$}; \node at (1.5,3.5) {$3$}; \node at (2.5,3.5) {$4$}; \node at (3.5,3.5) {$10$}; \node at (4.5,3.5) {$12$}; \node at (5.5,3.5) {$13$};
\node at (.5,2.5) {$2$}; \node at (1.5,2.5) {$7$}; \node at (2.5,2.5) {$8$};  \node at (3.5,2.5) {$14$}; \node at (4.5,2.5) {$16$}; \node at (5.5,2.5) {$20$};
\node at (.5,1.5) {$5$}; \node at (1.5,1.5) {$9$}; \node at (2.5,1.5) {$11$};  \node at (3.5,1.5) {$17$}; \node at (4.5,1.5) {$18$}; \node at (5.5,1.5) {$22$};
\node at (.5,.5) {$6$}; \node at (1.5,.5) {$15$}; \node at (2.5,.5) {$19$};  \node at (3.5,.5) {$21$}; \node at (4.5,.5) {$23$}; \node at (5.5,.5) {$24$};
\node at (3,-1) {$12\hspace{.1em}\vi{\underline{113}}\hspace{.1em}422\hspace{.1em}\vi{\underline{31311}}\hspace{.1em}242\vi{33}424\vi{3}44$};
\end{scope}
 \end{tikzpicture}
 \caption{The bijections $f_{rs}$ and $F_{rs}$ for $r=1$ and $s=3$. The blocks $B,B',B''$ containing two different letters and underlined.
Letting $\sigma=2431$ and $\tau=3421$, we have $\des_\sigma(T)=10=\des_\tau(f_{13}(T))$, $\plat(T)=4=\plat(f_{13}(T))$, and 
$\Des_\sigma(T)=\{2,4,5,8,10,14,16,18,20,22\}=\Des_\tau(F_{13}(T))$.
}
\label{fig:frs}
\end{figure}
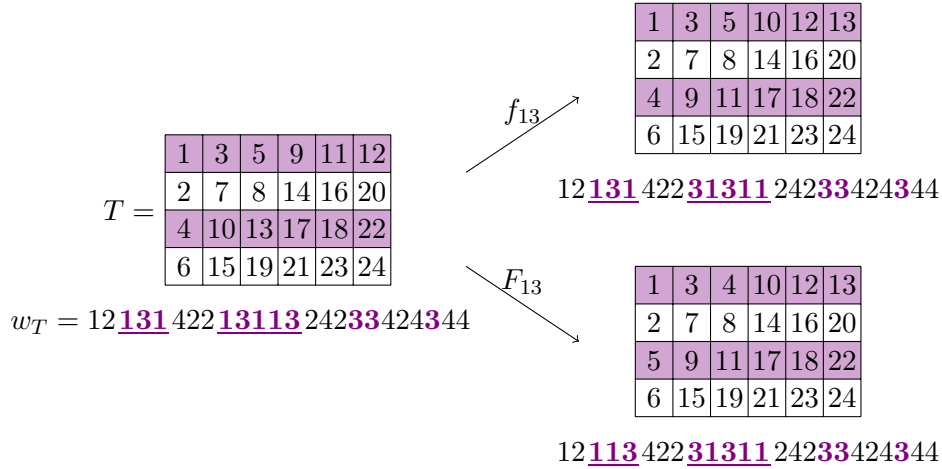

\begin{definition}\label{def:Frs}
To construct  the tableau word of $F_{rs}(T)$, we change each block $B$ to $B''$ as follows.
\begin{enumerate}[(1)]
\item Break $B$ into subblocks by splitting at each $s$ followed by $r$. For this, we view $B$ as a cyclic word so that its last entry is immediately followed by its first entry. In particular, unless $B$ ends with an $s$ and starts with an $r$, one of the subblocks straddles from the end to the beginning of $B$. 
\item For each subblock, write it as $r^as^b$ for some $a,b\ge1$, and change it to $s^br^a$, while keeping the splitting points between subblocks. Let $B''$ be the resulting block.
\end{enumerate}
\end{definition}
For example, if $B=srrsssrr$, we split it into two sublocks as $B=s{/}rrsss{/}rr$ (with the block $rrs$ stradling across the endpoints), and then change it into $B''=r{/}sssrr{/}sr$. See Figure~\ref{fig:frs} for a complete example of $f_{rs}$ and $F_{rs}$ applied to a tableau. 

It is clear from the definitions that $f_{rs}^{-1}=f_{sr}=f_{rs}$, so $f_{rs}$ is an involution, and that $F_{rs}^{-1}=F_{sr}$. Next we look at the effect of these bijections on descents and plateaus.

\begin{lemma}\label{lem:fF}
Let $\sigma\in\S_n$, let $r,s\in[n]$ be such that $|r-s|>1$, and suppose that $\sigma_r=\ell$ and $\sigma_s=\ell+1$.
Let $\tau\in\S_n$ be  obtained from $\sigma$ by switching these two entries, that is, $\tau_r=\ell+1$, $\tau_s=\ell$, and $\tau_i=\sigma_i$ for $i\notin\{r,s\}$. Then the bijections $f_{rs},F_{rs}:\SYT(k^n)\to\SYT(k^n)$ satisfy that, for all $T\in\SYT(k^n)$,
$$\des_\sigma(T)=\des_\tau(f_{rs}(T)) , \quad \plat(T)=\plat(f_{rs}(T)),$$
$$\Des_\sigma(T)=\Des_\tau(F_{rs}(T)).$$
\end{lemma}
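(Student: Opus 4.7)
The plan is to localize every change to the maximal blocks $B$ of consecutive entries in $\{r,s\}$ that appear in $w_T$, and then verify the three identities block by block.

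First I would record the key observation that $\sigma_m=\tau_m\notin\{\ell,\ell+1\}$ for every $m\in[n]\setminus\{r,s\}$, so the inequalities $\sigma_m>\ell$, $\sigma_m>\ell+1$, $\tau_m>\ell$, and $\tau_m>\ell+1$ are pairwise equivalent. This instantly takes care of every position $i$ where neither $\row(i)$ nor $\row(i+1)$ lies in $\{r,s\}$ (the tableau words agree there) and of every boundary position where exactly one of them does (a plateau is impossible, and the descent inequality depends only on the outside value, which is either below both $\ell$ and $\ell+1$ or above both). I would then note that the hypothesis $|r-s|>1$ guarantees well-definedness: the only ballot inequalities that can be affected are those for pairs $(j,j+1)$ with $j\in\{r-1,r,s-1,s\}$, and in each case only one of $r,s$ is involved, so any prefix inequality inside a block is implied by its validity at the end of that block in $w_T$, where the prefix counts of $r$ and $s$ already match those in the rearranged word.

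For $f_{rs}$ it then suffices to compare, inside each block, the $\sigma$-descents (the $sr$ adjacencies in $B$) with the $\tau$-descents (the $rs$ adjacencies in $B'$), together with the $rr$ and $ss$ plateaus. In Case~1 of Definition~\ref{def:frs} one has $B'=B$, and because $B$ starts and ends with the same letter the transitions between $r$-runs and $s$-runs alternate evenly, yielding equal $sr$- and $rs$-counts. In Case~2, writing $B=r^{a_1}s^{b_1}\cdots r^{a_j}s^{b_j}$ and $B'=s^{b_1}r^{a_1}\cdots s^{b_j}r^{a_j}$, both adjacency counts equal $j-1$, while the plateau contribution $\sum(a_i-1)+\sum(b_i-1)$ is identical because the multiplicities $a_i,b_i$ are merely permuted; the symmetric subcase is analogous. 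Adding the unchanged outside and boundary contributions yields both $f_{rs}$ identities.

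For $F_{rs}$ I want the stronger, set-valued statement $\Des_\sigma(T)=\Des_\tau(F_{rs}(T))$, so I need position-by-position agreement inside blocks. The plan is to show that the $sr$ adjacencies of $B$ and the $rs$ adjacencies of $B''$ occupy exactly the same positions. By construction the $sr$ adjacencies of $B$ are the cut positions. The cuts split $B$ cyclically into subblocks, and since every cut is immediately preceded by $s$ and followed by $r$, every subblock is of the form $r^as^b$ with $a,b\ge1$ whenever at least one cut exists (in the degenerate case where $B$ is a single run, neither $B$ nor $B''$ has any $sr$ or $rs$ adjacency, so there is nothing to check). Each such subblock becomes $s^br^a$ in $B''$, whose internal adjacencies are only of types $ss$, $sr$, and $rr$, never $rs$. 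At a boundary between consecutive subblocks of $B''$, the previous subblock $s^br^a$ ends in $r$ and the next $s^{b'}r^{a'}$ starts in $s$, producing exactly an $rs$ adjacency. Hence the $rs$ adjacencies of $B''$ coincide with the cut positions of $B$, as required.

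The main obstacle is the cyclic bookkeeping for $F_{rs}$: one must check carefully that every nondegenerate subblock genuinely has the form $r^as^b$ with $a,b\ge1$, so that no $rs$ adjacency can arise inside a subblock after the swap while exactly one arises at every subblock boundary. Once this has been established, the rest reduces to a routine classification of linear adjacencies by type in each of the cases above.
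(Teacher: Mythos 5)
Your proposal is correct and follows essentially the same route as the paper's proof: localize all discrepancies to the maximal $\{r,s\}$-blocks (using that $\ell$ and $\ell+1$ are consecutive values, so outside and boundary positions are unaffected), then match the $sr$-adjacencies of $B$ with the $rs$-adjacencies of $B'$ by count for $f_{rs}$ and, via the cut positions, by exact location for $F_{rs}$. Your treatment of the degenerate single-run blocks, the cyclic wrap-around cut, and the ballot condition just makes explicit what the paper asserts in passing.
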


\begin{proof}
Let us first consider the effect of $f_{rs}$ on the generalized descents and plateaus of the tableau. Clearly, $\plat(T)=\plat(f_{rs}(T))$, since plateaus correspond to pairs of adjacent equal entries in the tableau word, and the number of these is not affected by $f_{rs}$. 

As for descents, recall that $i\in\Des_\sigma(T)$ if $\sigma_{\row(i)}>\sigma_{\row(i+1)}$. Since the values $\ell$ and $\ell+1$ are consecutive, the only elements where $\Des_\sigma(T)$ and $\Des_\tau(f_{rs}(T))$ may differ must come from positions inside the blocks $B$ and $B'$ in Definition~\ref{def:frs}: adjacent pairs $sr$ in $B$ contribute to $\Des_\sigma(T)$, whereas adjacent pairs $rs$ in $B'$ contribute to $\Des_\tau(f_{rs}(T))$.  
In case (1), the block $B=B'$ has the same number of adjacent pairs $sr$ as adjacent pairs $rs$. In case (2), the number of adjacent pairs $sr$ in $B$ is equal to the number of adjacent pairs $rs$ in $B'$. It follows that $\des_\sigma(T)=\des_\tau(f_{rs}(T))$.

Let us now consider the effect of $F_{rs}$ on the generalized descents.
The only elements where $\Des_\sigma(T)$ and $\Des_\tau(F_{rs}(T))$ may differ must come from positions inside the blocks $B$ and $B''$ in Definition~\ref{def:Frs}: adjacent pairs $sr$ in $B$ contribute to $\Des_\sigma(T)$, whereas adjacent pairs $rs$ in $B''$ contribute to $\Des_\sigma(F_{rs}(T))$. The locations of these adjacent pairs are precisely the splitting points for the subblocks (not including a potential splitting point at the very end of the block), so they are preserved. We conclude that $\Des_\sigma(T)=\Des_\tau(F_{rs}(T))$.
\end{proof}

When $k=2$, the blocks $B$ can have size at most two, and the bijections $f_{rs}$ and $F_{rs}$ coincide, sending $\Des_\sigma$ to $\Des_\tau$ while simultaneously preserving $\plat$. However, for $k\ge3$, the pairs of statistics $(\Des_\sigma,\plat)$ and $(\Des_\tau,\plat)$ are not equidistributed on $\SYT(k^n)$ in general.

Recall that a permutation is {\em reverse-layered} if it can be decomposed as a sequence of monotone increasing blocks where the entries in each block are larger than the entries in the next block. Each block in this decomposition is called a layer. For example, $789{|}6{|}345{|}12\in\S_9$ is reverse-layered, with layers of size $3,1,3,2$, separated by vertical bars. For any subset $S\subseteq[n-1]$, there is a unique reverse-layered permutation $\lambda\in\S_n$ with $\Des(\lambda)=S$. If the elements of $S$ are $i_1<i_2<\dots<i_d$, then
$$
    \lambda=(n{-}i_1{+}1)(n{-}i_1{+}2)\ldots n{|}(n{-}i_2{+}1)(n{-}i_2{+}2) \ldots (n{-}i_1){|}\dots{|}12{\dots} (n{-}i_d).
$$

Given any two permutations in $\S_n$ with the same descent set, we can transform one into the other by repeatedly switching non-adjacent entries with consecutive values. Indeed, suppose that $\sigma\in\S_n$ is not reverse-layered, and that $\lambda\in\S_n$ is the unique reverse-layered permutation such that $\Des(\sigma)=\Des(\lambda)$. Note that $\lambda$ has the maximum number of inversions among all the permutations with the same descent set. We can then find a pair of non-adjacent entries in $\sigma$ with consecutive values that do not create an inversion, that is, indices $r,s$ such that $s-r>1$ and $\sigma_s=\sigma_r+1$. Transposing the entries in positions $r,s$ increases the number of inversions of the permutation, while preserving the descent set. 
For concreteness, we consider the specific choice of $r,s$ described in~\cite{elizalde_descents_2023}: let $m$ be the largest entry that is not in the same position in $\sigma$ as in $\lambda$, let $r$ be such that $\lambda_r=m$, and let $s$ be such that $\sigma_s=\sigma_r+1$.
By repeatedly applying such transpositions, one eventually reaches the permutation $\lambda$. For example, if $\sigma=14235$, then $\lambda=45123$, and the sequence of transpositions is
$$14235 \overset{2,5}{\longrightarrow} 15234 \overset{1,3}{\longrightarrow}  25134 \overset{1,4}{\longrightarrow} 35124 \overset{1,5}{\longrightarrow}  45123.$$

Denote by $f_\sigma:\SYT(k^n)\to\SYT(k^n)$ the composition of the bijections $f_{rs}$ corresponding to the above sequence of transpositions that transform $\sigma$ into $\lambda$, and define $F_\sigma$ similarly. In the above example, $f_{14235}=f_{15}\circ f_{14}\circ f_{13}\circ f_{25}$.
%In general, for $k\ge3$, different sequences of transpositions taking $\sigma$ to $\lambda$ may result in different bijections at the level of tableaux. 
Lemma~\ref{lem:fF} implies that, for all $T\in\SYT(k^n)$,
\begin{equation}\label{eq:f-des}
 \des_\sigma(T)=\des_{\lambda}(f_\sigma(T)) , \quad \plat(T)=\plat(f_\sigma(T)),
\end{equation}
$$\Des_\sigma(T)=\Des_{\lambda}(F_\sigma(T)).$$
The compositions $f_\tau^{-1}\circ f_\sigma$ and $F_\tau^{-1}\circ F_\sigma$ give a bijective proof of the following.

\begin{proposition}\label{prop:equidist}
 If $\sigma,\tau\in\S_n$ are such that $\Des(\sigma)=\Des(\tau)$, then
\begin{enumerate}[(i)]
\item the pairs of statistics $(\des_\sigma,\plat)$ and $(\des_\tau,\plat)$ are equidistributed on $\SYT(k^n)$,
\item the statistics $\Des_\sigma$ and $\Des_\tau$ are equidistributed on $\SYT(k^n)$.
\end{enumerate}
\end{proposition}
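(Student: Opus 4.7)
The plan is to reduce the claim to the case where one of the two permutations is the distinguished reverse-layered representative $\lambda$ of the equivalence class $\{\pi\in\S_n:\Des(\pi)=\Des(\sigma)\}$. Since $\Des(\sigma)=\Des(\tau)=\Des(\lambda)$, the discussion immediately preceding the proposition shows that both $\sigma$ and $\tau$ can be transformed into $\lambda$ by a finite sequence of transpositions at non-adjacent positions $r,s$ with consecutive values $\sigma_r=\ell$, $\sigma_s=\ell+1$. Each such transposition is precisely the kind covered by Lemma~\ref{lem:fF}.

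First I would fix notation for the reduction from $\sigma$. Write $\sigma=\sigma^{(0)},\sigma^{(1)},\ldots,\sigma^{(m)}=\lambda$ for the sequence of permutations produced by the greedy procedure (at each step, let $m'$ be the largest entry not yet in its $\lambda$-position, let $r$ be the row where $\lambda$ puts $m'$, and let $s$ be such that $\sigma^{(i)}_s=\sigma^{(i)}_r+1$), so that $\sigma^{(i+1)}$ is obtained from $\sigma^{(i)}$ by swapping entries at positions $r_i,s_i$ with $|r_i-s_i|>1$. Define
$$f_\sigma=f_{r_{m-1}s_{m-1}}\circ\cdots\circ f_{r_0s_0},\qquad F_\sigma=F_{r_{m-1}s_{m-1}}\circ\cdots\circ F_{r_0s_0}.$$
Iterating Lemma~\ref{lem:fF} along this sequence yields, for every $T\in\SYT(k^n)$,
$$\des_\sigma(T)=\des_\lambda(f_\sigma(T)),\quad \plat(T)=\plat(f_\sigma(T)),\quad \Des_\sigma(T)=\Des_\lambda(F_\sigma(T)),$$
as already recorded in equation~\eqref{eq:f-des}. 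Applying the same construction to $\tau$ produces analogous bijections $f_\tau$ and $F_\tau$.

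Since $f_\tau,F_\tau:\SYT(k^n)\to\SYT(k^n)$ are bijections, the compositions $f_\tau^{-1}\circ f_\sigma$ and $F_\tau^{-1}\circ F_\sigma$ are bijections on $\SYT(k^n)$ satisfying $(\des_\sigma,\plat)(T)=(\des_\tau,\plat)\bigl((f_\tau^{-1}\circ f_\sigma)(T)\bigr)$ and $\Des_\sigma(T)=\Des_\tau\bigl((F_\tau^{-1}\circ F_\sigma)(T)\bigr)$, proving parts~(i) and~(ii) respectively. There is no serious obstacle: all the combinatorial work has been absorbed into Lemma~\ref{lem:fF}. The one point deserving attention is the termination of the greedy reduction, which follows from the fact that each transposition strictly increases the number of inversions while preserving the descent set, and that $\lambda$ is the unique permutation with descent set $\Des(\sigma)$ attaining the maximum number of inversions.
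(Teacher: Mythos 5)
Your proposal is correct and follows exactly the paper's argument: reduce both $\sigma$ and $\tau$ to the common reverse-layered representative $\lambda$ via the greedy sequence of non-adjacent consecutive-value transpositions, iterate Lemma~\ref{lem:fF} to get $f_\sigma$, $F_\sigma$, $f_\tau$, $F_\tau$ satisfying equation~\eqref{eq:f-des}, and conclude with the compositions $f_\tau^{-1}\circ f_\sigma$ and $F_\tau^{-1}\circ F_\sigma$. No substantive differences from the paper's proof.
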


\subsection{Removing descents}\label{sec:removing}

The previous section allows us to focus on the statistics $\des_\lambda$ where $\lambda\in\S_n$ is a reverse-layered permutation. Recall that, for each $S\subseteq[n-1]$, there is one such permutation having $S$ as its descent set. In this section we analyze how these statistics change when removing elements from $S$ one at a time, with the goal of relating $\des_\lambda$ with $\des_{12\dots n}$.

Let $0\le\ell<m\le n$. Next we define a bijection $g_{\ell m}:\SYT(k^n)\to\SYT(k^n)$, which will be used to analyze the removal of largest element from $S$. Given $T\in\SYT(k^n)$, we will describe $g_{\ell m}(T)$ in terms of its tableau word, similarly to the bijections in Section~\ref{sec:nonadjacent}. Each entry in $w_T$ must belong to one of the sets $X=\{1,\dots,\ell\}$ (which is empty if $\ell=0$), $Y=\{\ell+1,\dots,m\}$, or $Z=\{m+1,\dots,n\}$.  The three subsequences of $w_T$ obtained by restricting to each one of these sets
are not changed by $g_{\ell m}$; rather, $g_{\ell m}$ only affects the interleaving among these subsequences. Thus, to describe $g_{\ell m}$, it will be enough to explain how it changes the simplified tableau word $\overline{w_T}$, defined as the word over $\{x,y,z\}$ whose $i$th entry records which of the sets $X,Y,Z$ the $i$th entry of $w_T$ belongs to.

\begin{definition}\label{def:glm}
To construct the simplified tableau word of $g_{\ell m}(T)$, we consider each maximal block $B$ of $\overline{w_T}$ having entries in $\{x,z\}$, and we change it to $B'$ as follows.
\begin{enumerate}[(1)]
\item If $B$ starts and ends with the same letter, let $B'=B$.
\item Otherwise, if $B=x^{a_1}z^{b_1}\dots x^{a_j}z^{b_j}$ for some $j\ge1$ and $a_i,b_i\ge1$ for all $i$, let $B'=z^{b_1}x^{a_1}\dots z^{b_j}x^{a_j}$;
conversely, if $B=z^{b_1}x^{a_1}\dots z^{b_j}x^{a_j}$, let  $B'=x^{a_1}z^{b_1}\dots x^{a_j}z^{b_j}$.
\end{enumerate}
\end{definition}

See Figure~\ref{fig:glm} for a complete example of $g_{\ell m}$ applied to a tableau. By definition, $g_{\ell m}$ is an involution. Note that, in the special case $\ell=0$, the word $\overline{w_T}$ has no $x$s, and so $g_{0m}$ is simply the identity map. Next we look at the effect of $g_{\ell m}$ on descents and plateaus.

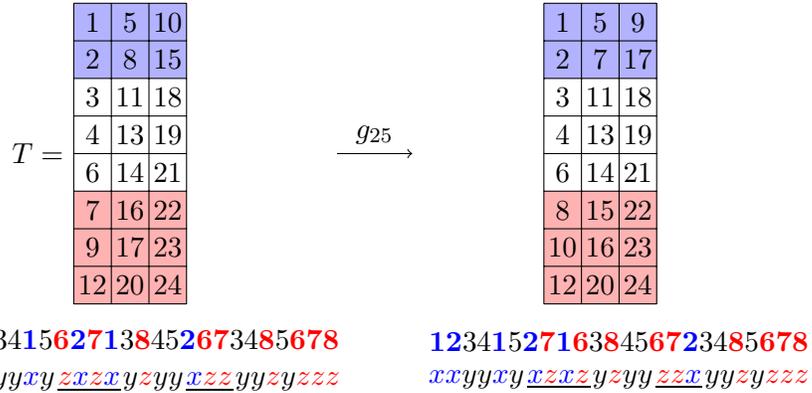
\begin{figure}[htb]
\centering
 \begin{tikzpicture}[scale=.5]
 \fill[red!30] (0,0) rectangle (3,3);
  \fill[blue!30] (0,6) rectangle (3,8);
\draw (0,0) grid (3,8);
\node at (.5,7.5) {$1$}; \node at (1.5,7.5) {$5$}; \node at (2.5,7.5) {$10$}; 
\node at (.5,6.5) {$2$}; \node at (1.5,6.5) {$8$}; \node at (2.5,6.5) {$15$}; 
\node at (.5,5.5) {$3$}; \node at (1.5,5.5) {$11$}; \node at (2.5,5.5) {$18$}; 
\node at (.5,4.5) {$4$}; \node at (1.5,4.5) {$13$}; \node at (2.5,4.5) {$19$}; 
\node at (.5,3.5) {$6$}; \node at (1.5,3.5) {$14$}; \node at (2.5,3.5) {$21$}; 
\node at (.5,2.5) {$7$}; \node at (1.5,2.5) {$16$}; \node at (2.5,2.5) {$22$};  
\node at (.5,1.5) {$9$}; \node at (1.5,1.5) {$17$}; \node at (2.5,1.5) {$23$};  
\node at (.5,.5) {$12$}; \node at (1.5,.5) {$20$}; \node at (2.5,.5) {$24$};  
\node[left] at (0,4) {$T=$};
\node at (1,-1)  {$w_T=\bl{12}34\bl{1}5\rd{6}\bl{2}\rd{7}\bl{1}3\rd{8}45\bl{2}\rd{67}34\rd{8}5\rd{678}$};
\node at (1,-2)  {$\overline{w_T}=\blu{xx}yy\blu{x}y\hspace{.1em}\underline{\red{z}\blu{x}\red{z}\blu{x}}\hspace{.1em}y\red{z}yy\hspace{.1em}\underline{\blu{x}\red{zz}}\hspace{.1em}yy\red{z}y\red{zzz}$};
\draw[->] (7,4)--node[above] {$g_{25}$} (9,4);

\begin{scope}[shift={(12.5,0)}]
 \fill[red!30] (0,0) rectangle (3,3);
  \fill[blue!30] (0,6) rectangle (3,8);
\draw (0,0) grid (3,8);
\node at (.5,7.5) {$1$}; \node at (1.5,7.5) {$5$}; \node at (2.5,7.5) {$9$}; 
\node at (.5,6.5) {$2$}; \node at (1.5,6.5) {$7$}; \node at (2.5,6.5) {$17$}; 
\node at (.5,5.5) {$3$}; \node at (1.5,5.5) {$11$}; \node at (2.5,5.5) {$18$}; 
\node at (.5,4.5) {$4$}; \node at (1.5,4.5) {$13$}; \node at (2.5,4.5) {$19$}; 
\node at (.5,3.5) {$6$}; \node at (1.5,3.5) {$14$}; \node at (2.5,3.5) {$21$}; 
\node at (.5,2.5) {$8$}; \node at (1.5,2.5) {$15$}; \node at (2.5,2.5) {$22$};  
\node at (.5,1.5) {$10$}; \node at (1.5,1.5) {$16$}; \node at (2.5,1.5) {$23$};  
\node at (.5,.5) {$12$}; \node at (1.5,.5) {$20$}; \node at (2.5,.5) {$24$};  
\node at (2,-1)  {$\bl{12}34\bl{1}5\bl{2}\rd{7}\bl{1}\rd{6}3\rd{8}45\rd{67}\bl{2}34\rd{8}5\rd{678}$};
\node at (2,-2)  {$\blu{xx}yy\blu{x}y\hspace{.1em}\underline{\blu{x}\red{z}\blu{x}\red{z}}\hspace{.1em}y\red{z}yy\hspace{.1em}\underline{\red{zz}\blu{x}}\hspace{.1em}yy\red{z}y\red{zzz}$};
\end{scope}
 \end{tikzpicture}
 \caption{The bijection $g_{\ell m}$ for $\ell=2$ and $m=5$. The blocks $B$ and $B'$ in case (2) of Definition~\ref{def:glm} are underlined.
Letting $S=\{2,5\}$, $\lambda=78456123$ and $\lambda'=78123456$, we have $\des_\lambda(T)=9=\des_{\lambda'}(g_{25}(T))+1$ and $\plat(T)=\plat(g_{25}(T))=0$.}
\label{fig:glm}
\end{figure}

\begin{lemma}\label{lem:g}
Let $S\subseteq[n-1]$ be nonempty, let $m=\max S$, and let $S'=S\setminus\{m\}$. Let $\ell=\max S'$ if $S'$ is nonempty, otherwise let $\ell=0$. Let $\lambda,\lambda'\in\S_n$ be the reverse-layered permutations with descent sets $S$ and $S'$, respectively. 
Then the bijection $g_{\ell m}:\SYT(k^n)\to\SYT(k^n)$ satisfies that, for all $T\in\SYT(k^n)$,
$$\des_\lambda(T)=\des_{\lambda'}(g_{\ell m}(T))+1, \qquad \plat(T)=\plat(g_{\ell m}(T)).$$
\end{lemma}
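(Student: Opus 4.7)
The plan is to track the effect of $g_{\ell m}$ on descents pair by pair, exploiting the fact that $\lambda$ and $\lambda'$ agree as row-value functions except on the relative order of $Y$- and $Z$-rows. First I would write $\lambda$ and $\lambda'$ explicitly on the partition $[n]=X\sqcup Y\sqcup Z$: in $\lambda$ the values satisfy $X>Y>Z$ pointwise, whereas in $\lambda'$ they satisfy $X>Z>Y$ pointwise, and both $\lambda$ and $\lambda'$ are increasing on each of the three sets. This yields a clean classification of each consecutive pair $(\row(i),\row(i+1))$: the $\lambda$- and $\lambda'$-descent indicators agree on all intra-set pairs and on the cross pairs $XY$, $YX$, $XZ$, $ZX$, and disagree only on $YZ$-pairs (descent in $\lambda$, ascent in $\lambda'$) and $ZY$-pairs (ascent in $\lambda$, descent in $\lambda'$).

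I would dispose of the degenerate case $\ell=0$ first: there $g_{0m}$ is the identity and $X$ is empty, so the desired identity reduces to the statement that the number of $YZ$-pairs in $\overline{w_T}$ exceeds the number of $ZY$-pairs by exactly one. This holds because $\overline{w_T}$ starts with $y$ (since $\row(1)=1\in Y$) and ends with $z$ (since $\row(kn)=n\in Z$).

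For $\ell\ge 1$, I would lean on two invariants of $g_{\ell m}$. Subsequence preservation keeps the $Y$-positions fixed and the $X$- and $Z$-subsequences unchanged; this, together with the fact that inside each block $B$ the lengths and left-to-right order of the $x$-runs and $z$-runs agree in $B$ and $B'$, immediately yields $\plat(T)=\plat(g_{\ell m}(T))$ and shows that $xx$- and $zz$-pairs contribute identically to $\des_\lambda$ and $\des_{\lambda'}$ (using that $\lambda$ and $\lambda'$ are both increasing on $X$ and on $Z$). Moreover, since $\overline{w_T}$ starts with $x$ (row $1\in X$) and ends with $z$ (row $n\in Z$), and $\overline{w_{g_{\ell m}(T)}}$ must satisfy the same, the first block of $\overline{w_T}$ must fall under case~(1) with both endpoints equal to $x$, and the last block under case~(1) with both endpoints equal to $z$; these two blocks are distinct, since $Y\neq\emptyset$ forces at least one $y$ between them.

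What remains is a block-by-block accounting of $\des_\lambda(T)-\des_{\lambda'}(g_{\ell m}(T))$, combining each block's interior $xz/zx$-transitions with its two $y$-boundaries. In case~(1), where $B'=B$, the numbers of $xz$- and $zx$-transitions are equal and unchanged, and both boundary letters are preserved, so every interior case-(1) block contributes zero. In case~(2), the intra-block $xz$-count shifts by $\pm 1$ (and $xz$ is a descent in both $\lambda$ and $\lambda'$), but the swap of the first and last letters of the block inverts the boundary pair (from $yx,zy$ to $yz,xy$, or vice versa), producing the compensating $\mp 1$; so interior case-(2) blocks also contribute zero. The first block (case~(1), $x$-endpoints) has only the right boundary $xy$, a descent in both $\lambda$ and $\lambda'$: contribution zero. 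The sole surviving discrepancy is at the last block (case~(1), $z$-endpoints), whose only boundary is the left-hand $yz$, which is a descent in $\lambda$ but an ascent in $\lambda'$: this contributes the net $-1$. Summing, and accounting for the unchanged $yy$-pairs between blocks, gives $\des_\lambda(T)=\des_{\lambda'}(g_{\ell m}(T))+1$. The step I expect to be most delicate, and would check carefully, is the forcing of the first and last blocks into case~(1): it is precisely this constraint that confines every case-(2) block to the interior, where its intra-block $\pm 1$ is exactly neutralised by the boundary inversion, so that the only net discrepancy comes from the missing right $y$-neighbour of the terminal $z$-block.
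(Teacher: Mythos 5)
Your proof is correct and follows essentially the same block-by-block accounting as the paper's: discrepancies are confined to $yz$/$zy$ boundary pairs and interior $xz$/$zx$ transitions, each case-(2) block's interior shift of $\pm1$ is cancelled by the inverted boundary pair, and the terminal all-$z$ block, lacking a right $y$-neighbour, supplies the net $+1$. The only cosmetic differences are that you treat $\ell=0$ as a separate degenerate case (the paper folds it in implicitly) and that you force the first and last blocks into case~(1) via the validity of the image tableau word rather than directly via the ballot condition.
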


\begin{proof}
It is clear that the number of plateaus is preserved by $g_{\ell m}$, since the number of pairs of equal adjacent letters in the tableau words is unchanged.

The reverse-layered permutations $\lambda$ and $\lambda'$ only differ in the relative order of the entries in positions $Y$ and $Z$: in $\lambda$, entries in positions $Y$ are larger than those in positions $Z$, whereas the opposite is true in $\lambda'$. The relative order of entries in positions within the same set $X$, $Y$ or $Z$ does not change, and neither does the fact that entries in positions $X$ are larger than those in positions $Y$ and $Z$.
It follows that the sets $\Des_\lambda(T)$ and $\Des_{\lambda'}(T)$ only differ in those $i$ such that $\row(i)\in Y$ and  $\row(i+1)\in Z$, or viceversa.
In terms of the simplified tableau word $\overline{w_T}$, an adjacent pair $yz$ contributes to $\Des_\lambda(T)$ but not to $\Des_{\lambda'}(T)$, whereas an adjacent pair $zy$ contributes to $\Des_{\lambda'}(T)$ but not to $\Des_{\lambda}(T)$.

Thus, the only elements where $\Des_\lambda(T)$ and $\Des_{\lambda'}(g_{\ell m}(T))$ may differ must come from positions inside or on the boundary of the blocks $B$ and $B'$ in Definition~\ref{def:glm}. Note that the word $\overline{w_T}$ must start with a nonempty sequence of $x$s before the first $y$ appears, and it must end with a nonempty sequence of $z$s following the last $y$. In particular, all the blocks $B$ that contain both $x$s and $z$s are preceded and followed by a $y$.

In case~(1), we have $B=B'$, so any discrepancies must come from adjacent pairs $yz$ and $zy$ on the boundaries of this block. These can only occur when the block starts and ends in $z$. In this case, the adjacent pair $yz$ on the left boundary only contributes to $\des_\lambda(T)$, whereas the adjacent pair $zy$ on the right boundary only contributes to $\des_{\lambda'}(g_{\ell m}(T))$, so the total contributions are the same. There is, however, one important exception: the block of $z$s at the end of $\overline{w_T}$ is not followed by a $y$, so the contribution of this block to $\des_{\lambda'}(g_{\ell m}(T))$ is one less than its contribution to $\des_\lambda(T)$.

In case~(2), the contributions of $B=x^{a_1}z^{b_1}\dots x^{a_j}z^{b_j}$ to $\des_\lambda(T)$ are the $j$ adjacent pairs $xz$,
whereas the contributions of $B'=z^{b_1}x^{a_1}\dots z^{b_j}x^{a_j}$ to $\des_{\lambda'}(g_{\ell m}(T))$ are the $j-1$ pairs $xz$ and the pair $xy$ on the right boundary, for a total of $j$ in both instances. Similarly, the contributions of $B=z^{b_1}x^{a_1}\dots z^{b_j}x^{a_j}$ to $\des_\lambda(T)$ are the $j-1$ pairs $xz$, the pair $yz$ on the left boundary, and the pair $xy$ on the right boundary, whereas the contributions of $B'=x^{a_1}z^{b_1}\dots x^{a_j}z^{b_j}$ to $\des_{\lambda'}(g_{\ell m}(T))$ are the $j$ pairs $xz$ and the pair $zy$ on the right boundary, for a total of $j+1$ in both instances.

Adding the contributions of all the blocks, it follows that $\des_\lambda(T)=\des_{\lambda'}(g_{\ell m}(T))+1$.
\end{proof}

By repeatedly applying Lemma~\ref{lem:g}, we can construct a bijection that relates the statistics $\des_\lambda$ and $\des_{12\dots n}$. Specifically, if the elements of $S\subseteq[n-1]$ are $i_1<i_2<\dots<i_d$, let 
$$g_S=g_{0i_1}\circ g_{i_1i_2} \circ\dots\circ g_{i_{d-1}i_d}.$$
Then, if $\lambda\in\S_n$ is the reverse-layered permutation with descent set $S$, Lemma~\ref{lem:g} implies that
\begin{equation}\label{eq:g-des}
\des_\lambda(T)=\des_{12\dots n}(g_S(T))+d \quad\text{and}\quad \plat(T)=\plat(g_S(T))
\end{equation}
for all $T\in\SYT(k^n)$. See Figure~\ref{fig:gS} for an example of this bijection.

\begin{figure}[htb]
\centering
 \begin{tikzpicture}[scale=.5]
 
\fill[red!30] (0,0) rectangle (3,2);
\fill[blue!30] (0,3) rectangle (3,8);
\draw (0,0) grid (3,8);
\node at (.5,7.5) {$1$}; \node at (1.5,7.5) {$5$}; \node at (2.5,7.5) {$10$}; 
\node at (.5,6.5) {$2$}; \node at (1.5,6.5) {$8$}; \node at (2.5,6.5) {$15$}; 
\node at (.5,5.5) {$3$}; \node at (1.5,5.5) {$11$}; \node at (2.5,5.5) {$17$}; 
\node at (.5,4.5) {$4$}; \node at (1.5,4.5) {$13$}; \node at (2.5,4.5) {$18$}; 
\node at (.5,3.5) {$6$}; \node at (1.5,3.5) {$14$}; \node at (2.5,3.5) {$20$}; 
\node at (.5,2.5) {$7$}; \node at (1.5,2.5) {$16$}; \node at (2.5,2.5) {$22$};  
\node at (.5,1.5) {$9$}; \node at (1.5,1.5) {$19$}; \node at (2.5,1.5) {$23$};  
\node at (.5,.5) {$12$}; \node at (1.5,.5) {$21$}; \node at (2.5,.5) {$24$};  
\node[left] at (0,4) {$T=$};
\draw[->] (4,4)--node[above] {$g_{56}$} (5.5,4);
 
 \begin{scope}[shift={(6.5,0)}]
 \fill[red!30] (0,0) rectangle (3,3);
  \fill[blue!30] (0,6) rectangle (3,8);
\draw (0,0) grid (3,8);
\node at (.5,7.5) {$1$}; \node at (1.5,7.5) {$5$}; \node at (2.5,7.5) {$10$}; 
\node at (.5,6.5) {$2$}; \node at (1.5,6.5) {$8$}; \node at (2.5,6.5) {$15$}; 
\node at (.5,5.5) {$3$}; \node at (1.5,5.5) {$11$}; \node at (2.5,5.5) {$18$}; 
\node at (.5,4.5) {$4$}; \node at (1.5,4.5) {$13$}; \node at (2.5,4.5) {$19$}; 
\node at (.5,3.5) {$6$}; \node at (1.5,3.5) {$14$}; \node at (2.5,3.5) {$21$}; 
\node at (.5,2.5) {$7$}; \node at (1.5,2.5) {$16$}; \node at (2.5,2.5) {$22$};  
\node at (.5,1.5) {$9$}; \node at (1.5,1.5) {$17$}; \node at (2.5,1.5) {$23$};  
\node at (.5,.5) {$12$}; \node at (1.5,.5) {$20$}; \node at (2.5,.5) {$24$};  
\draw[->] (4,4)--node[above] {$g_{25}$} (5.5,4);
\end{scope}

\begin{scope}[shift={(13,0)}]
 \fill[red!30] (0,0) rectangle (3,6);
\draw (0,0) grid (3,8);
\node at (.5,7.5) {$1$}; \node at (1.5,7.5) {$5$}; \node at (2.5,7.5) {$9$}; 
\node at (.5,6.5) {$2$}; \node at (1.5,6.5) {$7$}; \node at (2.5,6.5) {$17$}; 
\node at (.5,5.5) {$3$}; \node at (1.5,5.5) {$11$}; \node at (2.5,5.5) {$18$}; 
\node at (.5,4.5) {$4$}; \node at (1.5,4.5) {$13$}; \node at (2.5,4.5) {$19$}; 
\node at (.5,3.5) {$6$}; \node at (1.5,3.5) {$14$}; \node at (2.5,3.5) {$21$}; 
\node at (.5,2.5) {$8$}; \node at (1.5,2.5) {$15$}; \node at (2.5,2.5) {$22$};  
\node at (.5,1.5) {$10$}; \node at (1.5,1.5) {$16$}; \node at (2.5,1.5) {$23$};  
\node at (.5,.5) {$12$}; \node at (1.5,.5) {$20$}; \node at (2.5,.5) {$24$};  
\draw[->] (4,4)--node[above] {$g_{02}$} (5.5,4);
\end{scope}

\begin{scope}[shift={(19.5,0)}]
\draw (0,0) grid (3,8);
\node at (.5,7.5) {$1$}; \node at (1.5,7.5) {$5$}; \node at (2.5,7.5) {$9$}; 
\node at (.5,6.5) {$2$}; \node at (1.5,6.5) {$7$}; \node at (2.5,6.5) {$17$}; 
\node at (.5,5.5) {$3$}; \node at (1.5,5.5) {$11$}; \node at (2.5,5.5) {$18$}; 
\node at (.5,4.5) {$4$}; \node at (1.5,4.5) {$13$}; \node at (2.5,4.5) {$19$}; 
\node at (.5,3.5) {$6$}; \node at (1.5,3.5) {$14$}; \node at (2.5,3.5) {$21$}; 
\node at (.5,2.5) {$8$}; \node at (1.5,2.5) {$15$}; \node at (2.5,2.5) {$22$};  
\node at (.5,1.5) {$10$}; \node at (1.5,1.5) {$16$}; \node at (2.5,1.5) {$23$};  
\node at (.5,.5) {$12$}; \node at (1.5,.5) {$20$}; \node at (2.5,.5) {$24$};  
\node[right] at (3,4) {$=g_{\{2,5,6\}}(T)$};
\end{scope}
 \end{tikzpicture}
 \caption{The bijection $g_S=g_{02}\circ g_{25}\circ g_{56}$ for $S=\{2,5,6\}$. Letting $\lambda=78456312$, we have $\des_\lambda(T)=10=\des_{12\dots 8}(g_S(T))+3$ and $\plat(T)=\plat(g_{S}(T))=0$.}
\label{fig:gS}
\end{figure}
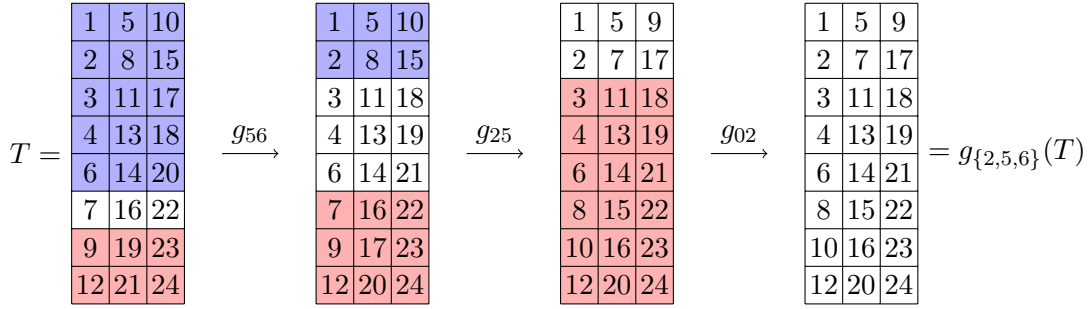

Finally, for any $\sigma\in\S_n$, let $S=\Des(\sigma)$ and define $\bij_\sigma=g_S\circ f_\sigma$. Combining equations~\eqref{eq:f-des} and~\eqref{eq:g-des}, we see that $\bij_\sigma$ satisfies the property stated in equation~\eqref{eq:bij_property}. See Figure~\ref{fig:bij} for a complete example of this bijection.

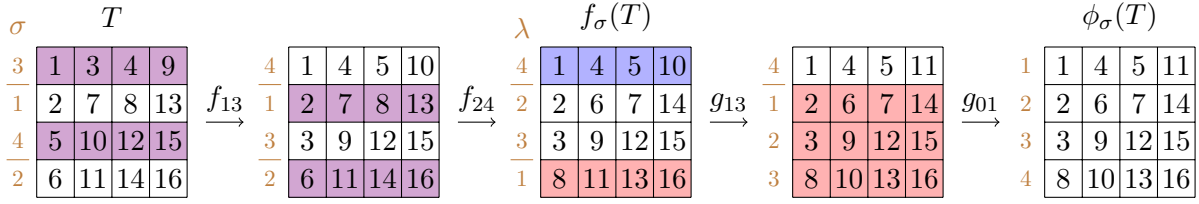
\begin{figure}[htb]
\centering
 \begin{tikzpicture}[scale=.5]
\fill[violet!35] (0,3) rectangle (4,4);
\fill[violet!35] (0,1) rectangle (4,2);
\draw (0,0) grid (4,4);
\node at (.5,3.5) {$1$}; \node at (1.5,3.5) {$3$}; \node at (2.5,3.5) {$4$}; \node at (3.5,3.5) {$9$}; 
\node at (.5,2.5) {$2$}; \node at (1.5,2.5) {$7$}; \node at (2.5,2.5) {$8$};  \node at (3.5,2.5) {$13$}; 
\node at (.5,1.5) {$5$}; \node at (1.5,1.5) {$10$}; \node at (2.5,1.5) {$12$};  \node at (3.5,1.5) {$15$}; 
\node at (.5,.5) {$6$}; \node at (1.5,.5) {$11$}; \node at (2.5,.5) {$14$};  \node at (3.5,.5) {$16$}; 
\node[brown] at (-.5,4.5) {$\sigma$}; \node[brown,scale=.8] at (-.5,3.5) {$3$}; \desbar{(-.5,3)} \node[brown,scale=.8] at (-.5,2.5) {$1$}; \node[brown,scale=.8] at (-.5,1.5) {$4$}; \desbar{(-.5,1)} \node[brown,scale=.8] at (-.5,.5) {$2$};
\node at (2,4.8) {$T$};
\draw[->] (4.5,2)--node[above] {$f_{13}$} (5.5,2);
 
 \begin{scope}[shift={(6.7,0)}]
\fill[violet!35] (0,2) rectangle (4,3);
\fill[violet!35] (0,0) rectangle (4,1);
\draw (0,0) grid (4,4);
\node at (.5,3.5) {$1$}; \node at (1.5,3.5) {$4$}; \node at (2.5,3.5) {$5$}; \node at (3.5,3.5) {$10$}; 
\node at (.5,2.5) {$2$}; \node at (1.5,2.5) {$7$}; \node at (2.5,2.5) {$8$};  \node at (3.5,2.5) {$13$}; 
\node at (.5,1.5) {$3$}; \node at (1.5,1.5) {$9$}; \node at (2.5,1.5) {$12$};  \node at (3.5,1.5) {$15$}; 
\node at (.5,.5) {$6$}; \node at (1.5,.5) {$11$}; \node at (2.5,.5) {$14$};  \node at (3.5,.5) {$16$}; 
\node[brown,scale=.8] at (-.5,3.5) {$4$}; \desbar{(-.5,3)} \node[brown,scale=.8] at (-.5,2.5) {$1$}; \node[brown,scale=.8] at (-.5,1.5) {$3$}; \desbar{(-.5,1)} \node[brown,scale=.8] at (-.5,.5) {$2$};
\draw[->] (4.5,2)--node[above] {$f_{24}$} (5.5,2);
\end{scope}

\begin{scope}[shift={(13.4,0)}]
 \fill[red!30] (0,0) rectangle (4,1);
  \fill[blue!30] (0,3) rectangle (4,4);
\draw (0,0) grid (4,4);
\node at (.5,3.5) {$1$}; \node at (1.5,3.5) {$4$}; \node at (2.5,3.5) {$5$}; \node at (3.5,3.5) {$10$}; 
\node at (.5,2.5) {$2$}; \node at (1.5,2.5) {$6$}; \node at (2.5,2.5) {$7$};  \node at (3.5,2.5) {$14$}; 
\node at (.5,1.5) {$3$}; \node at (1.5,1.5) {$9$}; \node at (2.5,1.5) {$12$};  \node at (3.5,1.5) {$15$}; 
\node at (.5,.5) {$8$}; \node at (1.5,.5) {$11$}; \node at (2.5,.5) {$13$};  \node at (3.5,.5) {$16$}; 
\node[brown] at (-.5,4.5) {$\lambda$}; \node[brown,scale=.8] at (-.5,3.5) {$4$}; \desbar{(-.5,3)} \node[brown,scale=.8] at (-.5,2.5) {$2$}; \node[brown,scale=.8] at (-.5,1.5) {$3$}; \desbar{(-.5,1)} \node[brown,scale=.8] at (-.5,.5) {$1$};
\node at (2,4.8) {$f_\sigma(T)$};
\draw[->] (4.5,2)--node[above] {$g_{13}$} (5.5,2);
\end{scope}

\begin{scope}[shift={(20.1,0)}]
 \fill[red!30] (0,0) rectangle (4,3);
 \draw (0,0) grid (4,4);
\node at (.5,3.5) {$1$}; \node at (1.5,3.5) {$4$}; \node at (2.5,3.5) {$5$}; \node at (3.5,3.5) {$11$}; 
\node at (.5,2.5) {$2$}; \node at (1.5,2.5) {$6$}; \node at (2.5,2.5) {$7$};  \node at (3.5,2.5) {$14$}; 
\node at (.5,1.5) {$3$}; \node at (1.5,1.5) {$9$}; \node at (2.5,1.5) {$12$};  \node at (3.5,1.5) {$15$}; 
\node at (.5,.5) {$8$}; \node at (1.5,.5) {$10$}; \node at (2.5,.5) {$13$};  \node at (3.5,.5) {$16$}; 
\node[brown,scale=.8] at (-.5,3.5) {$4$}; \desbar{(-.5,3)} \node[brown,scale=.8] at (-.5,2.5) {$1$}; \node[brown,scale=.8] at (-.5,1.5) {$2$}; \node[brown,scale=.8] at (-.5,.5) {$3$};
\draw[->] (4.5,2)--node[above] {$g_{01}$} (5.5,2);
\end{scope}

\begin{scope}[shift={(26.8,0)}]
 \draw (0,0) grid (4,4);
\node at (.5,3.5) {$1$}; \node at (1.5,3.5) {$4$}; \node at (2.5,3.5) {$5$}; \node at (3.5,3.5) {$11$}; 
\node at (.5,2.5) {$2$}; \node at (1.5,2.5) {$6$}; \node at (2.5,2.5) {$7$};  \node at (3.5,2.5) {$14$}; 
\node at (.5,1.5) {$3$}; \node at (1.5,1.5) {$9$}; \node at (2.5,1.5) {$12$};  \node at (3.5,1.5) {$15$}; 
\node at (.5,.5) {$8$}; \node at (1.5,.5) {$10$}; \node at (2.5,.5) {$13$};  \node at (3.5,.5) {$16$}; 
\node[brown,scale=.8] at (-.5,3.5) {$1$}; \node[brown,scale=.8] at (-.5,2.5) {$2$}; \node[brown,scale=.8] at (-.5,1.5) {$3$}; \node[brown,scale=.8] at (-.5,.5) {$4$};
\node at (2,4.8) {$\bij_\sigma(T)$};
\end{scope}
 \end{tikzpicture}
 \caption{The bijection $\bij_\sigma=g_{\Des(\sigma)}\circ f_\sigma$ for $\sigma=3142$, which has $\Des(\sigma)=\{1,3\}$. In this example,
  $\des_\sigma(T)=6=\des_{1234}(\bij_\sigma(T))+2$ and $\plat(T)=\plat(\bij_\sigma(T))=2$.}
\label{fig:bij}
\end{figure}

\section{Symmetries}

A consequence of Theorem~\ref{thm:main} is that the distribution of the number of descents on canon permutations is symmetric. For $k=2$, this fact was already noted and proved bijectively in~\cite{elizalde_descents_2023}.

\begin{corollary}\label{cor:symmetry}
For every $0\le h\le k(n-1)$,
$$|\{\pi\in\C^k_n:\des(\pi)=h\}|=|\{\pi\in\C^k_n:\des(\pi)=k(n-1)-h\}|.$$
\end{corollary}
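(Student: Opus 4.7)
The plan is to use Theorem~\ref{thm:main} to reduce the symmetry to the palindromicity of the two factors appearing in the factorization. Setting $u = 1$ in that theorem yields
\[
\poly^k_n(t, 1) = A_n(t) \, N_{n, k}(t, 1),
\]
and the assertion of the corollary is precisely that this polynomial is palindromic of degree $k(n - 1)$. Since a product of palindromic polynomials of degrees $a$ and $b$ is palindromic of degree $a + b$, it suffices to show that $A_n(t)$ is palindromic of degree $n - 1$ and that $N_{n, k}(t, 1)$ is palindromic of degree $(k - 1)(n - 1)$.

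The palindromicity of $A_n(t)$ is the classical Eulerian symmetry: the reversal map $\sigma_1 \cdots \sigma_n \mapsto \sigma_n \cdots \sigma_1$ on $\S_n$ sends $\des(\sigma)$ to $n - 1 - \des(\sigma)$, hence $t^{n - 1} A_n(1/t) = A_n(t)$.

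The palindromicity of $N_{n, k}(t, 1)$ is the identity $N(n, k, h) = N(n, k, (k - 1)(n - 1) - h)$, a known symmetry of generalized Narayana numbers for rectangular standard Young tableaux. It can be verified from Sulanke's product formula stated earlier, or proved combinatorially by interpreting $\SYT(k^n)$ as higher-dimensional lattice paths in which $\asc$ becomes a peak statistic on a region with a natural $180^\circ$ rotational symmetry. Combining (a) and (b) gives that $A_n(t) N_{n,k}(t,1)$ is palindromic of degree $n - 1 + (k - 1)(n - 1) = k(n - 1)$, which is the content of the corollary.

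The main obstacle is the palindromicity of $N_{n, k}(t, 1)$ for $k \geq 3$. In the $k = 2$ case this is simply the palindromicity of the Narayana polynomial, proved bijectively by Dyck path reversal. For general $k$, the obvious involutions on $\SYT(k^n)$ (such as $180^\circ$ rotation of the tableau combined with the complementation $i \mapsto kn + 1 - i$ of entries) can be checked to preserve $\asc$ rather than sending it to $(k - 1)(n - 1) - \asc$, so a direct bijective proof requires a more subtle construction; the palindromicity can nevertheless be verified algebraically from Sulanke's product formula.
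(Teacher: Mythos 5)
Your proposal is correct and follows essentially the same route as the paper: set $u=1$ in Theorem~\ref{thm:main}, invoke the palindromicity of $A_n(t)$ via reversal and the palindromicity of $N_{n,k}(t,1)$ via Sulanke's symmetry $N(n,k,h)=N(n,k,(k-1)(n-1)-h)$, and multiply. The paper likewise notes that this argument is not bijective because the generalized Narayana symmetry is only known algebraically (with a bijective proof deferred to a forthcoming paper), matching your closing remark.
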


\begin{proof}
It is well known that the Eulerian polynomials are palindromic, that is, $A_n(t)=t^{n-1}A_n(1/t)$, noting that
$\des(\sigma)=n-1-\des(\sigma_n\sigma_{n-1}\dots\sigma_1)$ for all $\sigma=\sigma_1\dots\sigma_n\in\S_n$.

A much less obvious fact is that the generalized Narayana polynomials are palindromic as well. Indeed, it is shown by Sulanke \cite[Cor.~1]{sulanke_generalizing_2004} that 
\begin{equation}\label{eq:symN} N(n,k,h)=N(n,k,(k-1)(n-1)-h) \end{equation}
for all $0\le h\le (k-1)(n-1)$, or equivalently,
$N_{n,k}(t,1)=t^{(k-1)(n-1)}N_{n,k}(1/t,1)$.

Using these symmetries together with Theorem~\ref{thm:main},
$$\poly^k_n(t,1)=A_n(t)\,N_{n,k}(t,1)=t^{k(n-1)}A_n(1/t)N_{n,k}(1/t,1)=t^{k(n-1)}\poly^k_n(1/t,1).$$
Taking the coefficient of $t^h$ on both sides, we obtain the stated equality.
\end{proof}

The above proof of Corollary~\ref{cor:symmetry} is not bijective, due to the fact that the symmetry of the generalized Narayana numbers, stated in equation~\eqref{eq:symN}, is not proved bijectively in~\cite{sulanke_generalizing_2004}. A bijective proof of equation~\eqref{eq:symN} will be given in~\cite{elizalde_generalized_nodate}.

\bibliographystyle{plain}
\bibliography{symmetry_des_rectangular_SYT}

\end{document}